\pdfoutput=1
\documentclass[11pt,a4paper]{article}

\usepackage[T1]{fontenc}
\usepackage[utf8]{inputenc}
\usepackage{lmodern}
\usepackage{microtype}
\usepackage{amsmath,amssymb,amsthm}
\usepackage{bm}
\usepackage{booktabs}
\usepackage{graphicx}
\usepackage{tikz}
\usetikzlibrary{arrows.meta,positioning,fit,backgrounds}
\usepackage[round,sort]{natbib}
\usepackage[margin=25mm]{geometry}
\usepackage{url}
\usepackage[hidelinks]{hyperref}

\newcommand{\R}{\mathbb{R}}
\newcommand{\E}{\mathbb{E}}
\newcommand{\Var}{\operatorname{Var}}
\newcommand{\Cov}{\operatorname{Cov}}
\newcommand{\tr}{\operatorname{tr}}
\newcommand{\sgn}{\operatorname{sign}}

\newcommand{\dto}{\xrightarrow{\;d\;}}
\newcommand{\pto}{\xrightarrow{\;p\;}}

\newcommand{\bth}{\bm{\theta}}                 
\newcommand{\bthz}{\bm{\theta}_0}              
\newcommand{\bthtil}{\tilde{\bm{\theta}}}      
\newcommand{\bthor}{\hat{\bm{\theta}}_{\mathrm{or}}}
\newcommand{\bthfe}{\hat{\bm{\theta}}_{\mathrm{fe}}}
\newcommand{\bd}{\bm{d}}                       
\newcommand{\dbar}{\bar{\bm{d}}}               
\newcommand{\Qm}{\bm{Q}}                       
\newcommand{\RD}{R_D}                          

\newcommand{\veps}{\varepsilon}
\newcommand{\bphi}{\bm{\varphi}}               
\newcommand{\bmm}{\bm{m}}                      
\newcommand{\bz}{\bm{z}}                       
\newcommand{\Fm}{\bm{F}}                       
\newcommand{\bb}{\bm{b}}                       
\newcommand{\bh}{\bm{h}}                       
\newcommand{\kap}{\kappa}                      
\newcommand{\gor}{g_{\mathrm{or}}}
\newcommand{\gfe}{g_{\mathrm{fe}}}
\newcommand{\Sor}{\bm{\Sigma}_{\mathrm{or}}}
\newcommand{\Sfe}{\bm{\Sigma}_{\mathrm{fe}}}

\theoremstyle{plain}
\newtheorem{theorem}{Theorem}
\newtheorem{lemma}[theorem]{Lemma}
\newtheorem{corollary}[theorem]{Corollary}
\theoremstyle{definition}
\newtheorem{assumption}{Assumption}
\theoremstyle{remark}
\newtheorem{remark}[theorem]{Remark}

\title{Rank-one variance inflation in polynomial estimation of signal parameters with a
sign-preserving fractional-power basis}
\author{Serhii Zabolotnii\\[4pt]
\small Department of Information, Multimedia Technologies and Design,\\
\small Cherkasy State Business College, Cherkasy 18028, Ukraine\\[2pt]
\small State Scientific Research Institute of Armament and Military Equipment\\
\small Testing and Certification, Cherkasy, Ukraine\\[4pt]
\small \href{mailto:zabolotnii.serhii@csbc.edu.ua}{zabolotnii.serhii@csbc.edu.ua}\quad
ORCID \href{https://orcid.org/0000-0003-0242-2234}{0000-0003-0242-2234}}
\date{}

\begin{document}
\maketitle

\begin{abstract}
Estimating noise moments need not preserve the oracle accuracy of a polynomial estimator.
Earlier plug-in theory establishes oracle accuracy for regression slopes when centered
regressors remove the estimated centering means from their equations. We determine the
covariance cost when that cancellation is absent. For a smooth signal model in a fixed design,
a fixed sign-preserving fractional-power basis and residual centering on the same sample,
the feasible-minus-oracle asymptotic covariance is
$\Sfe-\Sor=c_2(1-\gor)\Qm^{-1}\dbar\dbar^{\!\top}\Qm^{-1}$.
Here $c_2$ is the noise variance, $\gor$ the oracle variance reduction factor, and $\Qm$ and
$\dbar$ the second-moment matrix and mean of the signal gradient. Standard two-step theory
provides the expansion; score identities reduce its correction to a nonnegative scalar times
one outer product. Consistent estimation of the weights has no first-order cost, whereas
estimated centering affects exactly the parameter contrasts with a nonzero projection on
$\Qm^{-1}\dbar$, provided $\gor<1$. An additive intercept absorbs the entire correction;
without it, amplitude, frequency or phase can lose oracle accuracy. The trace loss separates
noise and design through $\gfe=\gor+(1-\gor)\RD$. We derive the limiting distortion of naive
Wald statistics and a corrected covariance estimator. Harmonic-signal simulations support the
directional prediction and document undercoverage from omitting the correction, including under
symmetric noise. Weight-estimation bias for some fractional bases remains substantial at the
sample sizes studied.

\medskip\noindent\textbf{Keywords:} Polynomial maximization method, Nonlinear regression, Non-Gaussian noise, Two-step estimation, Nuisance parameters, Variance reduction factor

\noindent\textbf{MSC 2020:} 62F12, 62J02, 62F10
\end{abstract}

\section{Introduction}\label{sec:intro}

\subsection{Problem}\label{sec:intro-problem}

Polynomial estimation uses a finite set of noise moments to improve on least squares without
specifying a noise density. In signal models those moments are usually estimated from the same
data used to estimate amplitudes, frequencies or other parameters. The question is which
parameters retain the accuracy available when the moments are known. Consistency of estimated
moments alone does not answer it: their error can still enter the leading distribution of the
parameter estimate.

The closest preceding result already answers this question for a particular slope equation.
\citet{zabolotnii2026sls} identifies optimally weighted second-order least squares with
degree-two polynomial estimation (Theorem~1), and establishes the oracle first-order law of the
feasible slope estimator for a fixed degree $S$ (Proposition~3). The latter result uses a linear
regression with an intercept, errors independent of the regressors, a nonsingular moment matrix
and finite moments through order $2S$. Centering the regressor makes the estimated centering
means cancel exactly. The remaining estimated weights enter an orthogonal block. Both feasible
estimation and the absence of a first-order cost for consistent weight estimates are therefore
established results. Section~\ref{sec:reconcile} compares the statements and assumptions.

Here the target is the covariance cost of residual centering when that cancellation is absent.
For a smooth signal model in a fixed design and a fixed sign-preserving fractional-power basis,
Theorem~\ref{thm:main}, under Assumption~\ref{ass:reg}, gives
\[
\Sfe-\Sor=c_2(1-\gor)\Qm^{-1}\dbar\dbar^{\!\top}\Qm^{-1}.
\]
The noise variance is $c_2$, the oracle variance reduction factor is $\gor$, and $\Qm$ and
$\dbar$ are the limiting second-moment matrix and mean of the signal gradient. The correction
has rank at most one, is positive semidefinite, and lies along $\Qm^{-1}\dbar$. Its scalar
factor is the part of the least-squares variance removed by the oracle score. Increasing the
number of estimated centering moments does not create additional affected directions.

The proof uses the established two-step expansion of \citet{murphy1985estimation} and
\citet{newey1994large}. Neither this expansion nor orthogonality of estimated weights is new.
A low-rank nuisance derivative alone does not imply a positive semidefinite covariance
difference. The model-specific result is the reduction of the full correction, including the
dependence between stages, to the displayed outer product with its explicit coefficient.
It uses an influence difference uncorrelated with the oracle influence
(Remark~\ref{rem:dependence}); different instruments can destroy this property
(Section~\ref{sec:rank-contrast}).

The correction identifies the affected statistical targets. Contrasts orthogonal to
$\Qm^{-1}\dbar$ retain the oracle influence function. An additive intercept absorbs the
entire correction, leaving the other parameters unchanged to first order
(Corollary~\ref{cor:constant}). Without an intercept the loss can fall on the signal parameters
of interest. Its trace is summarized by the design weight $\RD$ in
$\gfe=\gor+(1-\gor)\RD$ (Section~\ref{sec:g}). Corollary~\ref{cor:wald} gives the limiting
law of Wald statistics computed with the oracle covariance and explains why coverage error
persists as the sample size increases whenever the tested contrast is affected.

For the harmonic signal used in the experiments, $\RD=0.2191$; for other signal models on
the same window it ranges from $0.0068$ to $0.2774$ (Table~\ref{tab:classes}). These values
refer to the specified parameter coordinates. Monte Carlo evidence concerns the harmonic
signal; the other models provide deterministic design checks. The experiments assess the
direction of the loss and coverage of corrected regions, while retaining substantial
finite-sample discrepancies for some fractional bases (Section~\ref{sec:s3}).

\begin{remark}[Fixed basis]\label{rem:fixed-basis}
All results are proved for a \emph{fixed} basis parameter; selecting the basis shape from the
data is outside the scope of the paper. Section~\ref{sec:limits} states the conditions under
which first-order asymptotics survive such a selection and explains why the fixed sandwich is
not the covariance of the data-driven estimator.
\end{remark}

\subsection{Related results}\label{sec:related}

The review has five parts. We first consider approaches to signal-parameter estimation beyond the
Gaussian model, then the school's results and the general theory of two-step estimators and
orthogonality on which the proofs rest. The last part concerns the first stage, together with the
main example and the noise distribution used in the experiments.

\subsubsection{Signal-parameter estimation beyond the Gaussian model}\label{sec:related-nongauss}
Beyond the Gaussian model, signal parameters are estimated in three ways, developed most fully
for the harmonic model. Higher-order statistics rely on the fact that the cumulants of Gaussian
noise vanish above the second order. \citet{swami1991cumulant} showed for a sum of harmonics that
one-dimensional slices of the fourth-order cumulant of the noisy signal coincide with the
autocorrelation of the noise-free one, so high-resolution correlation methods can be applied to
cumulants. In this approach non-Gaussianity is a property of the signal and serves to suppress
Gaussian noise; non-Gaussianity of the noise itself is not used to improve accuracy. Fractional
lower-order moments~\citep{shao1993signal,ma1995parameter} are designed for impulsive
$\alpha$-stable noise with infinite variance. Instead of the second moment they use moments of
order $q<\alpha$ and signed powers $\sgn(\veps)|\veps|^{q}$, the same nonlinearity as an element
of basis~\eqref{eq:basis}. Robust statistics builds $M$-estimators with a bounded influence
function~\citep{zoubir2018robust}, and for harmonic regression in heavy-tailed noise a Laplace
periodogram based on least absolute deviations has been proposed~\citep{li2008laplace}. In both
of these approaches the score function is chosen in advance for robustness and is not optimized
for a given noise description. Beyond the harmonic model, \citet{nandi2021estimation} estimated
the parameters of a sum of chirp signals in $\alpha$-stable errors. The polynomial maximization
method lies between these approaches. The noise variance is finite, the noise is described by a
finite set of moments, and the score weights are optimal for that description within the class
of linear combinations of basis elements. Because the weights depend on the noise moments, the
consequences of estimating these moments become a meaningful question.

\subsubsection{Results of the Kunchenko school}\label{sec:related-school}
For signal-parameter estimation the school's results developed in several directions. For a
harmonic signal in noise with skewness and excess kurtosis, oracle variance reduction factors
were obtained for non-identically distributed sample values. In particular, at $S=2$ the joint
estimates of amplitude, frequency and phase share the factor $1-\gamma_3^2/2$ regardless of the
design sums~\citep{palahin2016joint}. Joint estimation of the amplitude together with the noise
variance and skewness was also considered, but in a design with a zero mean signal gradient,
where no accuracy is lost (Section~\ref{sec:reconcile}). The method has been extended to
correlated non-Gaussian processes~\citep{vokorokos2017parameters}, to linear regression with
asymmetric~\citep{zabolotnii2018polynomial} and symmetric~\citep{zabolotnii2021estimating}
errors, where the factor $g_S$ is the same for all regression coefficients, and to nonlinear
regression~\citep{zabolotnii2024application}. A continuous generalization of the basis, the
signed-parity fractional-power polynomial with a shape parameter~$\alpha$
\citep{zabolotnii2026patp}, defines the class of score functions used in this paper.

\subsubsection{Effect of estimated nuisance parameters}\label{sec:related-nuisance}
\citet{randles1982asymptotic} gave a general answer to the question of how replacing an unknown
parameter $\lambda$ by its estimate $\hat\lambda$ changes the limit law of a statistic. There
$\hat\lambda$ is replaced by a variable $\gamma$, and the \emph{limiting mean} of the statistic
is differentiated. The substitution leaves the limit law unchanged if and only if the derivative
of the limiting mean is zero; otherwise an additional term appears in the limit law.
\citet{pierce1982asymptotic} considered the case in which the limiting mean does not depend
on~$\lambda$ and $\hat\lambda$ is asymptotically efficient. The corresponding limiting
quantities are then uncorrelated, and the variance \emph{decreases}. For estimating equations in
which the nuisance parameter is found from (partial) score equations, \citet{lok2024how}
systematized this effect together with a consistent sandwich estimator. In our problem the
conditions of that result fail: the centering means are estimated by sample means rather than by
score equations, and the increment is nonnegative.

The econometric literature describes the same correction from a computational point of view. It
gives an additive correction to the second-stage covariance through the cross derivatives of
the two stages' scores~\citep{murphy1985estimation}, the condition under which standard errors
with generated regressors remain valid~\citep{pagan1984econometric}, and the sandwich covariance
of a two-step extremum estimator~\citep[Sect.~6]{newey1994large}. Stacking the moment conditions
of both stages into one system lets any sequential estimator be treated as a method-of-moments
estimator~\citep{newey1984method} in the sense of \citet{hansen1982large}. In that
representation, replacing weight matrices by consistent estimates does not change the asymptotic
covariance, whereas plugging in an estimated first-stage \emph{parameter} adds a separate term
to it. In the statistical literature the same device is known as the calculus of $M$-estimators
with stacked equations~\citep{stefanski2002calculus}; \citet{boe2024practical} apply it to
two-stage regression as a cheaper alternative to the bootstrap. Its semiparametric
generalization is the asymptotic variance formula based on a pathwise
derivative~\citep{newey1994asymptotic}, extended to generated regressors by
\citet{hahn2013asymptotic}. If the dimension of the first stage is large relative to~$\sqrt n$,
a first-order bias appears~\citep{cattaneo2019two}. Here the first stage is finite-dimensional,
and there is no such bias.

These results provide the tools used here. We do not claim the general sandwich correction,
orthogonality of consistently estimated weights, or a rank bound for a linear influence map.
The additional calculation identifies the correction for residual centering in the present
model and evaluates its sign, direction and coefficient. The feasible-slope result of
\citet{zabolotnii2026sls} already covers a setting where the centering contribution cancels;
Section~\ref{sec:reconcile} separates that result from Theorem~\ref{thm:main}.

\subsubsection{Neyman orthogonality and adaptive estimation}
In the modern literature, the condition under which a nuisance parameter does not affect the
limit law is called Neyman orthogonality: the Gateaux derivative of the expected score in the
direction of the nuisance is zero~\citep{chernozhukov2018double}.
\citet{chernozhukov2022locally} give a general construction of locally robust moment functions,
whose mean is locally insensitive to the first stage. In the finite-dimensional case this
condition coincides with a zero block of cross derivatives in sequential method-of-moments
estimation~\citep{hansen1982large} and with the criterion of \citet{randles1982asymptotic}. For
estimating equations, a class that includes the polynomial maximization method, parameter
orthogonality has been extended to insensitivity to the nuisance parameter, and under this
condition estimating the nuisance costs no efficiency~\citep{jorgensen2004parameter}. Higher
moments of non-Gaussian residuals appear in this theory too: in partially linear regression,
second-order orthogonal moments exist if and only if the regressor residual is
non-Gaussian~\citep{mackey2018orthogonal}. Semiparametric theory has a stronger result.
\citet{stone1975adaptive} estimated the score of an unknown symmetric density from the same
sample and attained the parametric bound, and \citet{bickel1982adaptive} stated a general
criterion of adaptivity: the efficient score for the parameter of interest must be orthogonal to
the nuisance tangent space. The monograph of \citet{bickel1993efficient} gives a complete account
of this theory. One may ask why estimating a few moments affects the asymptotic covariance when,
under adaptivity, estimating an entire score function does not. Section~\ref{sec:split} gives the
answer. Adaptivity concerns the \emph{shape} of the score under symmetry, whereas here the shape
is fixed, and a finite-dimensional centering vector is estimated at rate $\sqrt{n}$ in a model
\emph{without an intercept}.

\subsubsection{Model, baseline fit and noise distribution}
The first stage is a nonlinear least-squares (NLS) fit of the model; the classical results cited
above give the conditions for its $\sqrt n$-consistency. For the harmonic
signal~\eqref{eq:harmonic} used in the simulations, this is a sine fit. In our notation it has
\emph{three} parameters, because the model has no intercept; the four-parameter form of the
IEEE~1057 standard also includes a constant term. Such fits are practically efficient for
Gaussian and quantization noise~\citep{handel2000properties}. Outside that class their
efficiency is not guaranteed, and this is where a polynomial estimator can improve accuracy.
\citet{kollar2005improved} proposed a refined definition of the best-fitting sine for testing
analog-to-digital converters, and \citet{carbone2014rigorous} gave a rigorous analysis of the fit
on quantized data. The metrological result closest to our first stage is the estimation of noise
power by three- and four-parameter sine-fitting algorithms~\citep{belega2012noise}. There the
noise variance is estimated from the fit; here the signed centering means of fractional powers of
the residuals are estimated as well. In nonlinear regression theory, a neighboring line of work is
$L_p$-estimation~\citep{ivanov2008asymptotic}: its score $p|\veps|^{p-1}\sgn\veps$ has the same
functional form as a single element of the sign-preserving basis. More generally,
\citet{ivanov2007consistency} proved the consistency of $M$-estimators for an arbitrary even
Lipschitz loss function. The settings of these works differ from ours. They consider Gaussian
dependent noise, continuous time, a single minimized functional and a score without centering by
estimated moments. Moreover, the condition of evenness and Lipschitz continuity does not cover the
odd fractional-power score, and the condition of symmetry and unimodality does not cover
asymmetric errors. In the experiments the noise follows the asymmetric exponential power
distribution~\citep{zhu2009properties}, with different left and right tail exponents; its
skewness and kurtosis have closed forms. \citet{matsui2016fractional} give ways to compute the
fractional absolute moments $\E|X-\mu|^{\gamma}$, $1<\gamma<2$, of heavy-tailed laws. The vector
$\bb$ also needs moments of negative orders, which these formulas do not cover
(Section~\ref{sec:neg-orders}). The degenerate mirror case of the basis is the
expectile~\citep{newey1987asymmetric}; a comparison with expectiles is beyond the scope of this
paper.

\section{Model, noise, basis and notation}\label{sec:model}

We observe
\begin{equation}\label{eq:model}
y_i \;=\; r_i(\bthz) + \veps_i,
\qquad
r_i(\bth) \;=\; r(t_i;\bth),
\qquad i=1,\dots,n,
\end{equation}
where $\bth\in\R^p$ is the vector of signal parameters and $r(t;\bth)$ is a known function, twice
continuously differentiable in $\bth$. The nodes $t_1,\dots,t_n$ form a fixed design on a finite
window, and $\veps_i$ are errors with zero mean and finite variance $c_2=\E\veps^2$ (the precise
requirements are in Assumption~\ref{ass:reg}). The noise description is incomplete: no density is
specified, and only a finite set of moments is used. The main example is the harmonic signal with
an envelope
\begin{equation}\label{eq:harmonic}
r(t;\bth) \;=\; A\,e(t)\cos(\Omega t + \Phi),
\qquad
\bth=(A,\Omega,\Phi)^{\!\top}\in\R^3,
\end{equation}
where $e(\cdot)$ is a known envelope. The same class includes, among others, a sum of several
harmonics, a damped sinusoid, a chirp signal and a smooth pulse with unknown amplitude, position
and duration; Section~\ref{sec:g} gives the design characteristics of these models.

We denote the signal gradient and its sample moments by
\begin{equation}\label{eq:design}
\bd_i=\frac{\partial r_i(\bthz)}{\partial\bth},
\qquad
\Qm_n=\frac1n\sum_{i=1}^n \bd_i\bd_i^{\!\top},
\qquad
\dbar_n=\frac1n\sum_{i=1}^n \bd_i,
\end{equation}
and assume $\Qm_n\to\Qm>0$, $\dbar_n\to\dbar$. The window mean of the gradient, $\dbar$, plays a
key role in the paper, because the increment is concentrated along the direction $\Qm^{-1}\dbar$.

We use the sign-preserving fractional-power basis (form~B of \citealp{zabolotnii2026patp}) with a
\emph{fixed} shape parameter~$\rho$:
\begin{equation}\label{eq:basis}
q_1=1,\quad q_j=p_j(\rho),\ j=2,\dots,S,
\qquad
\varphi_j(\veps)=\sgn(\veps)\,|\veps|^{q_j}.
\end{equation}
The exponents follow the quadratic map of form~B,
\[
p_j(\rho)=\frac1j+\Bigl(4-j-\frac3j\Bigr)\rho+\Bigl(2j-4+\frac2j\Bigr)\rho^{2},
\qquad 0\le\rho\le1,
\]
so that $p_j(0)=1/j$ (a fractional basis), $p_j(1)=j$ (signed integer powers), and $p_j(1/2)=1$
for every $j$, where all basis columns coincide. The signed centering means, the centered basis, its correlation matrix, the vector of derivative
moments, the optimal weights, the sensitivity and the centered score are defined as
\begin{equation}\label{eq:score}
\begin{aligned}
\bmm&=\E[\bphi(\veps)], &\quad \bz(\veps)&=\bphi(\veps)-\bmm, &\quad \Fm&=\E[\bz\bz^{\!\top}],\\
\bb&=\bigl(q_j\,\E|\veps|^{q_j-1}\bigr)_{j=1}^{S}, &\quad \bh&=\Fm^{-1}\bb, &\quad
\kap&=\bh^{\!\top}\bb=\bb^{\!\top}\Fm^{-1}\bb,\\
\psi(\veps)&=\bh^{\!\top}\bz(\veps).
\end{aligned}
\end{equation}
Optimality of the weights gives two identities that we use below:
\begin{equation}\label{eq:identities}
\Var[\psi(\veps)]=\bh^{\!\top}\Fm\bh=\bh^{\!\top}\bb=\kap,
\qquad
\Cov[\psi(\veps),\veps]=\bh^{\!\top}\Fm\bm{e}_1=b_1=1,
\end{equation}
where the second equality uses $q_1=1$, $\varphi_1(\veps)=\veps$ and $b_1=1$.

All matrices $\bm{\Sigma}$ below are asymptotic covariances of $\sqrt{n}(\hat\bth-\bthz)$; the
covariance of the estimate itself is $\bm{\Sigma}/n$.

\subsection{Regularity conditions}\label{sec:assumptions}

All statements of the paper are proved under one set of conditions. We state it here because
Lemma~\ref{lem:mm} already needs it. Items~1--3 concern the model and the design, items~4--6 the
basis and the noise moments, and items~7--8 the two stages of the estimator.

\begin{assumption}\label{ass:reg}
For a fixed $\rho$, the following conditions suffice.
\begin{enumerate}
\item $\bthz$ is an interior point, $r_i(\bth)$ is twice continuously differentiable in a
      neighborhood $\mathcal N$ of $\bthz$, and the first and second design derivatives are
      uniformly bounded:
      $\sup_i\lVert\bd_i\rVert<\infty$ and
      $\sup_i\sup_{\bth\in\mathcal N}\bigl\lVert\partial^2 r_i(\bth)/
      \partial\bth\,\partial\bth^{\!\top}\bigr\rVert<\infty$. For the harmonic
      signal~\eqref{eq:harmonic} on a finite window with a bounded envelope this holds
      automatically; in particular, it implies $\max_i\lVert\bd_i\rVert/\sqrt n\to0$.
\item $\Qm_n\to\Qm>0$, $\dbar_n\to\dbar$.
\item The errors are independent and identically distributed, independent of the fixed design,
      $\E\veps=0$, $0<c_2=\E\veps^2<\infty$.
\item $\rho$ lies outside the band where the exponents coalesce, the exponents are distinct,
      $\Fm>0$, and the condition number of the population $\Fm$ is finite.
\item The moments required for $\Fm$ and $\bmm$ exist; in particular,
      $\E|\veps|^{q_j+q_k}<\infty$ for all pairs of basis functions.
\item For the expansion of residual centering, $\E|\veps|^{q_j-1}<\infty$ and
      $\E|\veps|^{2q_j}<\infty$; there is no atom at zero, and when $q_j<1$ the noise has a
      density that is bounded in a neighborhood of zero.
\item The preliminary NLS estimate is $\sqrt n$-consistent; the constraints and safeguards are
      inactive with probability tending to one; the refinement step is accepted in full.
\item The estimates $\hat\Fm,\hat\bb,\hat\bh,\hat\kap$ are consistent, and the stochastic
      expansion is uniform in a neighborhood of $\bthz$.
\end{enumerate}
\end{assumption}

Section~\ref{sec:limits} discusses what item~6 implies for the admissible exponents $q_j$, in
particular that the bound $q_j>1/2$ is neither necessary nor sufficient. Items~7 and~8 are
high-level conditions in the sense of \citet[Sect.~6]{newey1994large}: they concern the estimator
as implemented, including its safeguards, and the paper does not derive them from primitive
conditions on the noise law.

\section{The oracle estimator and its covariance}\label{sec:oracle}

The oracle estimator uses the true $\bmm$, $\Fm$, $\bb$, $\bh$ and $\kap$. Starting from a
preliminary estimate $\bthtil$ obtained by constrained nonlinear least squares, it performs one
refinement step:
\begin{equation}\label{eq:oracle-step}
\bthor=\bthtil+\tilde\Qm_n^{-1}\kap^{-1}\,\frac1n\sum_{i=1}^n \tilde\bd_i\,
\psi\bigl(y_i-r_i(\bthtil)\bigr),
\qquad
\tilde\bd_i=\frac{\partial r_i(\bthtil)}{\partial\bth},\quad
\tilde\Qm_n=\frac1n\sum_{i=1}^n\tilde\bd_i\tilde\bd_i^{\!\top}.
\end{equation}
The gradient is evaluated at the preliminary estimate, so the step is implementable; by item~1
of Assumption~\ref{ass:reg}, replacing $\tilde\bd_i$ by $\bd_i$ changes it only by
$o_p(n^{-1/2})$. Since the preliminary estimate is $\sqrt{n}$-consistent and
$\E[\psi'(\veps)]=\bh^{\!\top}\bb=\kap$, the refinement step cancels the influence of the first
stage, so that
\begin{equation}\label{eq:oracle-influence}
\sqrt{n}\,(\bthor-\bthz)=\Qm^{-1}\kap^{-1}\,\frac1{\sqrt n}\sum_{i=1}^n \bd_i\,\psi(\veps_i)+o_p(1),
\qquad
\Sor=\Qm^{-1}/\kap .
\end{equation}
The covariance of the NLS estimate itself is $c_2\Qm^{-1}$, so the oracle variance reduction
factor is
\begin{equation}\label{eq:gor}
\gor=\frac{1}{c_2\,\kap}.
\end{equation}

Quantity~\eqref{eq:gor} is computed from the true moment characteristics; it is the same oracle
level as in \citet{palahin2016joint}. The increment derived below is measured relative to it.

\section{The feasible scheme as a two-step estimator}\label{sec:feasible}

The feasible scheme differs from~\eqref{eq:oracle-step} only in where the moment characteristics
come from (Fig.~\ref{fig:pipeline}):
\begin{enumerate}
\item constrained NLS gives $\bthtil$;
\item the residuals $\hat\veps_i=y_i-r_i(\bthtil)$ are centered by their sample mean,
      $\hat\veps_i^{c}=\hat\veps_i-n^{-1}\sum_k\hat\veps_k$;
\item the centered residuals give the sample counterparts of~\eqref{eq:score},
      \[
      \hat\bmm=\frac1n\sum_i\bphi(\hat\veps_i^{c}),\qquad
      \hat\Fm=\frac1n\sum_i\hat\bz_i\hat\bz_i^{\!\top},\qquad
      \hat b_j=\frac{q_j}{n}\sum_i|\hat\veps_i^{c}|^{q_j-1},
      \]
      where $\hat\bz_i=\bphi(\hat\veps_i^{c})-\hat\bmm$, and $|\hat\veps_i^{c}|$ in $\hat b_j$ is
      replaced by $\max(|\hat\veps_i^{c}|,\epsilon_n)$ when $q_j<1$
      (Section~\ref{sec:neg-orders}); then $\hat\bh=\hat\Fm^{-1}\hat\bb$,
      $\hat\kap=\hat\bh^{\!\top}\hat\bb$ and $\hat c_2=n^{-1}\sum_i(\hat\veps_i^{c})^2$;
\item one refinement step is performed with the feasible score
      $\hat\psi(\veps)=\hat\bh^{\!\top}[\bphi(\veps)-\hat\bmm]$.
\end{enumerate}

\begin{figure}[t]
\centering
\begin{tikzpicture}[
  node distance=6mm and 7mm,
  box/.style={draw,rounded corners=2pt,align=center,inner sep=4pt,minimum height=9mm,font=\small\sffamily},
  free/.style={box,fill=black!4},
  paid/.style={box,fill=black!18,very thick},
]
\node[box] (nls) {constrained NLS\\$\bthtil$};
\node[box,right=of nls] (res) {residuals\\$\hat\veps_i^{c}$};
\node[free,above right=3mm and 9mm of res] (fb) {$\hat\Fm,\ \hat\bb,\ \hat\bh,\ \hat\kap$\\\scriptsize orthogonal block};
\node[paid,below right=3mm and 9mm of res] (mm) {$\hat\bmm$\\\scriptsize centering means};
\coordinate (mid) at (mm.east |- res.east);
\node[box,right=7mm of mid] (step) {one refinement\\step};
\node[box,right=of step] (out) {$\bthfe$};
\draw[->] (nls) -- (res);
\draw[->] (res) -- (fb);
\draw[->] (res) -- (mm);
\draw[->] (fb) -- (step);
\draw[->] (mm) -- (step);
\draw[->] (step) -- (out);
\draw[->,rounded corners=6pt] (nls.south) |- ([yshift=-5mm]mm.south) -| (step.south);
\end{tikzpicture}
\caption{Feasible estimation scheme. The light block holds the moment characteristics whose
estimation is orthogonal and costs no first-order accuracy; the dark block holds the signed
centering means, whose estimation produces increment~\eqref{eq:main}}
\label{fig:pipeline}
\end{figure}

The covariance of such an estimator follows from the general two-step sandwich formula. Let the
main estimating equation and the equation for the nuisance parameter be
$U_n(\bth,\bm\eta)=0$ and $V_n(\bthtil,\bm\eta)=0$, and let
\[
\bm{A}=\E\Bigl[\frac{\partial U_i}{\partial\bth^{\!\top}}\Bigr],\quad
\bm{C}=\E\Bigl[\frac{\partial U_i}{\partial\bm\eta^{\!\top}}\Bigr],\quad
\bm{H}=\E\Bigl[\frac{\partial V_i}{\partial\bm\eta^{\!\top}}\Bigr],\quad
\bm{D}=\E\Bigl[\frac{\partial V_i}{\partial\bthtil^{\!\top}}\Bigr].
\]
If $\sqrt n(\bthtil-\bthz)=n^{-1/2}\sum_i\bm{\ell}_i+o_p(1)$, the efficient score and the
sandwich are
\begin{equation}\label{eq:twostep}
\bm{s}_i^{\mathrm{eff}}=\bm{u}_i-\bm{C}\bm{H}^{-1}(\bm{v}_i+\bm{D}\bm{\ell}_i),
\qquad
\bm{\Sigma}_{\text{2-step}}=\bm{A}^{-1}\Var(\bm{s}_i^{\mathrm{eff}})\bm{A}^{-\top}.
\end{equation}
This is the form of \citet[Sect.~6]{newey1994large} (Section~\ref{sec:related-nuisance}).

In the feasible scheme the nuisance parameter that matters to first order is $\bm\eta=\bmm$; the
block $(\Fm,\bb,\bh,\kap)$ is left out because it is orthogonal (Section~\ref{sec:split}). The
two equations are
\[
U_i(\bth,\bmm)=\bd_i\,\bh^{\!\top}\bigl[\bphi\bigl(y_i-r_i(\bth)\bigr)-\bmm\bigr],
\qquad
V_i(\bthtil,\bmm)=\bphi(\hat\veps_i^{c})-\bmm,
\]
where $\hat\veps_i^{c}$ depends on $\bthtil$, and the expectations in $\bm A,\bm C,\bm H,\bm D$
are averaged over the design. Then $\bm A=-\kap\Qm$, $\bm C=-\dbar\,\bh^{\!\top}$ and
$\bm H=-\bm I$. The averaged expectation of $\partial V_i/\partial\bthtil^{\!\top}$ is
$-\bb\,n^{-1}\sum_i(\bd_i-\dbar_n)^{\!\top}=\bm 0$, so $\bm D=\bm 0$, and Lemma~\ref{lem:mm}
below shows that the empirical counterpart of this term vanishes as well. The same lemma gives
$\bm v_i=\bz(\veps_i)-\bb\,\veps_i$, where the term $-\bb\,\veps_i$ comes from centering by the
sample mean. Substituting into~\eqref{eq:twostep} and using $\bh^{\!\top}\bb=\kap$,
\[
\bm s_i^{\mathrm{eff}}=\bd_i\,\psi(\veps_i)-\dbar\bigl[\psi(\veps_i)-\kap\,\veps_i\bigr],
\qquad
\bm\Sigma_{\text{2-step}}=\Qm^{-1}\Bigl[\lim_{n\to\infty}\frac1n\sum_i\Var(\bm s_i^{\mathrm{eff}})\Bigr]\Qm^{-1}\big/\kap^{2},
\]
where $\bm s_i^{\mathrm{eff}}$ is the influence term $\bm\chi_i$ of Theorem~\ref{thm:main} below.

\begin{remark}[The sign of the increment is not fixed in advance]\label{rem:pierce}
The condition of the result of \citet{pierce1982asymptotic} (Section~\ref{sec:related-nuisance}),
asymptotic efficiency of the first stage, fails here: constrained NLS is not efficient in
non-Gaussian noise. The cross term in~\eqref{eq:twostep} therefore does not vanish, and only the
general statement of \citet{randles1982asymptotic} applies. In our problem, positive
semidefiniteness of the difference follows from Theorem~\ref{thm:main}, not from the general
theory.
\end{remark}

\section{Main result}\label{sec:main}

\subsection{Influence of the estimated centering moments}\label{sec:influence}

\begin{lemma}\label{lem:mm}
Under Assumption~\ref{ass:reg}, for every exponent $q_j$,
\begin{equation}\label{eq:mm-influence}
\sqrt{n}\,(\hat\bmm-\bmm)=\frac1{\sqrt n}\sum_{i=1}^n\bigl[\bz(\veps_i)-\bb\,\veps_i\bigr]+o_p(1),
\end{equation}
where the term generated by the preliminary NLS estimate is absent and the contribution of the
first basis column ($q_1=1$) is exactly zero.
\end{lemma}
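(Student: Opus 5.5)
The argument is a stochastic expansion of each coordinate $\hat m_j=n^{-1}\sum_i\varphi_j(\hat\veps_i^{c})$ around the true errors. First I write the centered residual as
\[
\hat\veps_i^{c}=\veps_i-\delta_i,
\qquad
\delta_i=\bigl(\bd_i-\dbar_n\bigr)^{\!\top}(\bthtil-\bthz)+\bar\veps_n+R_i,
\]
where $\bar\veps_n=n^{-1}\sum_k\veps_k$. The remainder $R_i$ collects the second-order Taylor terms of $r_i$. By item~1 of Assumption~\ref{ass:reg} and the $\sqrt n$-consistency in item~7, $\max_i|R_i|=O_p(n^{-1})$. The first two pieces of $\delta_i$ are $O_p(n^{-1/2})$ uniformly in $i$, since $\sup_i\lVert\bd_i\rVert<\infty$.

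For $q_1=1$ the claim is exact: $\varphi_1(\hat\veps_i^{c})=\hat\veps_i^{c}$ and these values sum to zero by construction. Hence $\hat m_1=0=m_1$. Also $z_1(\veps)-b_1\veps=\veps-\veps=0$, so both sides of \eqref{eq:mm-influence} vanish identically in that coordinate.

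For $j\ge2$ I decompose
\[
\hat m_j-m_j=\frac1n\sum_i\bigl[\varphi_j(\veps_i)-m_j\bigr]+\frac1n\sum_i\bigl[\varphi_j(\veps_i-\delta_i)-\varphi_j(\veps_i)\bigr].
\]
The first sum gives the $\bz(\veps_i)$ term. The second sum should be approximated by $-\frac1n\sum_i\varphi_j'(\veps_i)\delta_i$, where $\varphi_j'(\veps)=q_j|\veps|^{q_j-1}$ and $\E\varphi_j'(\veps)=b_j$. Granting this linearization, the term $(\bd_i-\dbar_n)^{\!\top}(\bthtil-\bthz)$ contributes
\[
-\Bigl[\frac1n\sum_i\varphi_j'(\veps_i)(\bd_i-\dbar_n)\Bigr]^{\!\top}(\bthtil-\bthz).
\]
The bracket has mean exactly $b_j\cdot n^{-1}\sum_i(\bd_i-\dbar_n)=0$. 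Its variance is $O(n^{-1})$ because $\E|\veps|^{2q_j-2}$ is finite by item~6 (this needs checking for $q_j<1$; the bounded density near zero handles it). So the bracket is $O_p(n^{-1/2})$, and multiplied by $\bthtil-\bthz=O_p(n^{-1/2})$ it is $o_p(n^{-1/2})$. This is why the NLS term disappears, and it does not require the NLS influence function. The $\bar\veps_n$ term contributes $-\bigl(\frac1n\sum_i\varphi_j'(\veps_i)\bigr)\bar\veps_n=-b_j\bar\veps_n+o_p(n^{-1/2})$ by the law of large numbers. This is the $-\bb\,\veps_i$ piece. The $R_i$ term is $O_p(n^{-1})$ after averaging.

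The main obstacle is justifying the linearization when $\varphi_j$ is not differentiable at zero and $q_j<1$ makes $\varphi_j'$ unbounded near zero. I would handle this with a stochastic-equicontinuity argument rather than a pointwise Taylor bound. Define the empirical process
\[
G_n(\bm u,v)=\frac1{\sqrt n}\sum_i\Bigl\{\varphi_j\bigl(\veps_i-n^{-1/2}[(\bd_i-\dbar_n)^{\!\top}\bm u+v]\bigr)-\varphi_j(\veps_i)-\text{(its mean)}\Bigr\}
\]
over compact sets of $(\bm u,v)$. The deterministic mean part is handled by differentiating $\mu(s)=\E\varphi_j(\veps-s)$ at $s=0$. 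Here $\mu'(0)=-b_j$ is justified by the integrability $\E|\veps|^{q_j-1}<\infty$, and $\mu$ is $C^1$ near zero thanks to the bounded density near zero. A second-order expansion of $\mu$, together with $\sum_i(\bd_i-\dbar_n)=0$, then gives the same conclusions as above.

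For the fluctuation part, I would show $\sup|G_n|=o_p(1)$ via Chebyshev on a finite grid plus monotonicity of $\varphi_j$, which is nondecreasing since it is a signed power. Monotonicity brackets each increment between grid points. The variance per term is bounded by $\E|\varphi_j(\veps-s)-\varphi_j(\veps)|^2$. This tends to zero as $s\to0$: by Hölder continuity with exponent $q_j$ when $q_j\le1$, and by local Lipschitz continuity with the moment $\E|\veps|^{2q_j}$ when $q_j>1$. Plugging in the random $(\bm u,v)=\sqrt n(\bthtil-\bthz,\bar\veps_n)$, which is $O_p(1)$, completes the expansion.
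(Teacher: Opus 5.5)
Your proof is correct, but its rigorous part takes a different route from the paper's.

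\textbf{The paper's route.} The paper Taylor-expands $\bphi(\hat\veps_i^{c})$ pointwise and invokes item~8 of Assumption~\ref{ass:reg} to make the remainder uniform. It then removes the NLS term through the \emph{empirical} coefficient $\bm{A}_n=n^{-1}\sum_i\bphi'(\veps_i)(\bd_i-\dbar_n)^{\!\top}$. The identity $\sum_i(\bd_i-\dbar_n)=\bm 0$ kills its mean part. The centered part $n^{-1}\sum_i[\bphi'(\veps_i)-\bb](\bd_i-\dbar_n)^{\!\top}$ is shown to be $o_p(1)$ by a truncated weak law of large numbers, which needs only $\E|\veps|^{q_j-1}<\infty$.

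\textbf{Your route.} You linearize only the deterministic function $\mu(s)=\E\varphi_j(\veps-s)$, so the same identity acts on $\mu'(0)=-b_j$. You control the fluctuation by monotone bracketing and the $L_2$-continuity of shifts of $\varphi_j$ (H\"older continuity for $q_j\le1$). This derives the linearization from items~1, 6 and~7 rather than assuming it through item~8. That matters exactly where $\varphi_j$ is not differentiable at zero and $\varphi_j'$ is unbounded. The paper's argument is shorter, but it rests on that high-level condition at this step.

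\textbf{Two details to tighten.} First, the bracketing works because the common shift $v$ absorbs both the grid mesh in $\bm u$ and the remainders $R_i$, since $\max_i|R_i|=o_p(n^{-1/2})$. Second, you need only differentiability of $\mu$ at zero, not a second-order expansion, which a merely bounded density does not guarantee. With $s_i=n^{-1/2}[(\bd_i-\dbar_n)^{\!\top}\bm u+v]$ and $\max_i|s_i|=O(n^{-1/2})$, the remainders satisfy $n^{-1/2}\sum_i o(|s_i|)=o(1)$.

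\textbf{A false claim to drop.} In your heuristic paragraph you say item~6 makes $\E|\veps|^{2q_j-2}$ finite. This fails when $q_j\le1/2$. A density bounded near zero gives $\E|\veps|^{s}<\infty$ only for $s>-1$, and if the density is bounded away from zero there, the moment is infinite for $s\le-1$. Such exponents are admitted: at $\rho=0$, $q_2=1/2$ and $q_3=1/3$ (see Section~\ref{sec:neg-orders}). So the bracket $n^{-1}\sum_i\varphi_j'(\veps_i)(\bd_i-\dbar_n)$ need not be $O_p(n^{-1/2})$. It only has to be $o_p(1)$, which the first-moment weak law gives, and your equicontinuity argument does not use the claim at all.
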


The proof is given in Appendix~\ref{app:lem-mm}; its key step is as follows. In the expansion of
$\hat\bmm$ about $\bmm$, the coefficient of $\sqrt n(\bthtil-\bthz)$ is the empirical matrix
$\bm{A}_n=n^{-1}\sum_i\bphi'(\veps_i)(\bd_i-\dbar_n)^{\!\top}$. The identity
$\sum_i(\bd_i-\dbar_n)=\bm 0$ annihilates only its population part, and the remainder tends to
zero by the law of large numbers. The contribution of the preliminary NLS estimate therefore
vanishes: centering the residuals on the same sample asymptotically orthogonalizes the
signed-moment estimator with respect to the unknown location parameter.

\subsection{Separating the sources of the increment}\label{sec:split}

We write the feasible score as $\hat\psi(\veps)=\hat\bh^{\!\top}[\bphi(\veps)-\hat\bmm]$ and
consider the derivatives of its expectation with respect to each block of nuisance parameters.

For the block $(\Fm,\bb,\bh,\kap)$ the derivative is zero, because the score is centered:
\begin{equation}\label{eq:orth}
\E[\bz(\veps)]=\bm 0
\quad\Longrightarrow\quad
\frac{\partial}{\partial \bh^{\!\top}}\,\E\bigl[\bh^{\!\top}\bz(\veps)\bigr]=\E[\bz(\veps)]^{\!\top}=\bm 0^{\!\top}.
\end{equation}
Hence the product of $(\hat\bh-\bh)$ and the centered empirical score is $o_p(n^{-1/2})$;
likewise, the error of $\hat\kap$ multiplies a score of order $O_p(n^{-1/2})$. In the language of
\citet{chernozhukov2018double} this block is \emph{Neyman orthogonal}; in the terms of
\citet{hansen1982large} it has $\bm{C}=\bm 0$, and consistency of the estimates suffices.

For the block of centering means the derivative is not zero:
\begin{equation}\label{eq:nonorth}
\frac{\partial\psi}{\partial\bmm^{\!\top}}=-\bh^{\!\top},
\qquad
\frac{\partial}{\partial\bmm^{\!\top}}\,\E\bigl[\bd\,\psi\bigr]=-\,\dbar\,\bh^{\!\top}\ne\bm 0
\ \text{ if } \dbar\ne\bm 0 .
\end{equation}
The first-order loss of accuracy thus comes precisely from estimating the signed centering
moments. The feasible scheme estimates two different blocks of nuisance parameters, and only one
of them affects the asymptotic covariance.

\subsection{The rank-one theorem}\label{sec:theorem}

\begin{theorem}\label{thm:main}
Under Assumption~\ref{ass:reg}, for a fixed basis parameter and centering on the same sample,
\begin{equation}\label{eq:feas-influence}
\sqrt{n}\,(\bthfe-\bthz)=\Qm^{-1}\kap^{-1}\frac1{\sqrt n}\sum_{i=1}^n\bm\chi_i+o_p(1),
\qquad
\bm\chi_i=(\bd_i-\dbar)\,\psi(\veps_i)+\dbar\,\kap\,\veps_i,
\end{equation}
and the asymptotic covariance of the feasible estimator is
\begin{equation}\label{eq:main}
\Sfe=\Sor+\Bigl(c_2-\frac1\kap\Bigr)\Qm^{-1}\dbar\dbar^{\!\top}\Qm^{-1}
=\Sor+c_2\,(1-\gor)\,\Qm^{-1}\dbar\dbar^{\!\top}\Qm^{-1}.
\end{equation}
The difference $\Sfe-\Sor$ is positive semidefinite and has rank at most one.
\end{theorem}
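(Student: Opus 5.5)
The plan is to expand the one-step estimator in three layers: first the feasible step itself, then the two nuisance blocks, then the resulting covariance. Write the feasible step as in \eqref{eq:oracle-step} with $\psi$ replaced by $\hat\psi=\hat\bh^{\!\top}[\bphi-\hat\bmm]$ and $\kap$ by $\hat\kap$.

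First, linearize the empirical score $n^{-1}\sum_i\tilde\bd_i\hat\psi(y_i-r_i(\bthtil))$ around $(\bthz,\bmm,\bh)$. Item~1 of Assumption~\ref{ass:reg} lets us replace $\tilde\bd_i$, $\tilde\Qm_n$ by $\bd_i$, $\Qm_n$ at cost $o_p(n^{-1/2})$. The orthogonality \eqref{eq:orth}, together with the consistency in item~8, lets us replace $\hat\bh,\hat\kap$ by $\bh,\kap$: their errors multiply centered sums of order $O_p(n^{-1/2})$. Expanding in $\bthtil$ by the uniform expansion of item~8 produces $-\kap\Qm\sqrt n(\bthtil-\bthz)$, which cancels the leading $\bthtil$ term exactly as in the oracle derivation \eqref{eq:oracle-influence}. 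The only remaining nuisance term is $-\dbar_n\bh^{\!\top}\sqrt n(\hat\bmm-\bmm)$, coming from \eqref{eq:nonorth}.

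Second, substitute Lemma~\ref{lem:mm}. This gives $-\dbar\,n^{-1/2}\sum_i[\psi(\veps_i)-\kap\veps_i]$, since $\bh^{\!\top}\bz=\psi$ and $\bh^{\!\top}\bb=\kap$. Adding the oracle term $n^{-1/2}\sum_i\bd_i\psi(\veps_i)$ yields $\bm\chi_i=(\bd_i-\dbar)\psi(\veps_i)+\dbar\kap\veps_i$. Replacing $\dbar_n$ by $\dbar$ costs $o_p(1)$ because the sums are $O_p(1)$. Asymptotic normality then follows from Lindeberg--Feller: the summands are independent with bounded $\bd_i$ (item~1) and finite $\Var\psi$ and $c_2$.

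Third, compute the covariance of the influence term. Using \eqref{eq:identities}, $\Var\psi=\kap$ and $\Cov(\psi,\veps)=1$, so
\[
\Var\bm\chi_i=\kap(\bd_i-\dbar)(\bd_i-\dbar)^{\!\top}+\kap\bigl[(\bd_i-\dbar)\dbar^{\!\top}+\dbar(\bd_i-\dbar)^{\!\top}\bigr]+\kap^2c_2\dbar\dbar^{\!\top}.
\]
Averaging over $i$, the cross terms vanish because $n^{-1}\sum_i(\bd_i-\dbar_n)=\bm 0$. The first term gives $\kap(\Qm-\dbar\dbar^{\!\top})$. The limit of the average is therefore $\kap\Qm+(\kap^2c_2-\kap)\dbar\dbar^{\!\top}$. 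Sandwiching with $\Qm^{-1}/\kap$ on both sides gives
\[
\Sfe=\Qm^{-1}/\kap+(c_2-1/\kap)\Qm^{-1}\dbar\dbar^{\!\top}\Qm^{-1},
\]
and by \eqref{eq:gor} the coefficient equals $c_2(1-\gor)$.

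Positive semidefiniteness reduces to $c_2\kap\ge1$. This is Cauchy--Schwarz: $1=\Cov(\psi,\veps)^2\le\Var\psi\,\Var\veps=\kap c_2$. Rank at most one is immediate from the outer product.

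I expect the main obstacle to be the first step, specifically rigorously justifying that the orthogonal block and the $\bthtil$-dependence contribute nothing at order $n^{-1/2}$. The basis functions with $q_j<1$ are nonsmooth at zero, so the derivative $\E\psi'=\kap$ must be obtained by differentiating the expectation rather than pathwise. I would do this with a stochastic-equicontinuity argument, using the bounded density near zero and the absence of an atom (item~6), and otherwise lean on the high-level uniform expansion in item~8.
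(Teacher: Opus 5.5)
Your proposal is correct and follows essentially the same route as the paper. You use orthogonality of the weight block $(\hat\bh,\hat\kap)$ to isolate the single surviving nuisance term $-\dbar_n\bh^{\!\top}\sqrt n(\hat\bmm-\bmm)$, substitute Lemma~\ref{lem:mm} to obtain $\bm\chi_i$, average its variance using the score identities \eqref{eq:identities}, and get the sign from Cauchy--Schwarz. One small correction: because $\bm\chi_i$ is written with $\dbar$ rather than $\dbar_n$, the averaged cross terms equal $\dbar_n-\dbar\to\bm 0$, not exactly zero; the paper phrases it this way.
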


\begin{proof}
Substitute Lemma~\ref{lem:mm} into the expansion of the one-step refinement. The feasible and
oracle scores differ by
\[
\hat\psi(\veps)-\psi(\veps)=(\hat\bh-\bh)^{\!\top}\bz(\veps)-\hat\bh^{\!\top}(\hat\bmm-\bmm).
\]
For a single observation the first term need not be $o_p(n^{-1/2})$; no root-$n$ rate for
$\hat\bh-\bh$ is assumed. It is negligible in the scaled estimating equation, where it enters as
$\bm Z_n(\hat\bh-\bh)$ with $\bm Z_n=n^{-1/2}\sum_i\bd_i\bz(\veps_i)^{\!\top}$. This matrix has
zero mean by~\eqref{eq:orth} and bounded second moments by items~1, 2 and~5 of
Assumption~\ref{ass:reg}, so $\bm Z_n=O_p(1)$, while $\hat\bh-\bh=o_p(1)$ by item~8. In the second
term $\hat\bh$ may be replaced by $\bh$, because $\sqrt n(\hat\bmm-\bmm)=O_p(1)$ by
Lemma~\ref{lem:mm}. Hence
\[
\frac1{\sqrt n}\sum_{i=1}^n\bd_i\bigl[\hat\psi(\veps_i)-\psi(\veps_i)\bigr]
=-\,\dbar_n\,\bh^{\!\top}\sqrt n\,(\hat\bmm-\bmm)+o_p(1).
\]
To first order, replacing $\bmm$ by $\hat\bmm$ thus shifts the centered score by a \emph{scalar}
that is the same for all $i$, and the estimating equation is shifted by this scalar times the
\emph{fixed} vector $\dbar$. Hence the rank of the increment is at most one, not~$S$: a vector of $S$
centering means is estimated, but it enters the equation only through the single scalar linear
combination $\bh^{\!\top}\bmm$. Together with~\eqref{eq:mm-influence}, this
gives~\eqref{eq:feas-influence}.

The summands $\bm\chi_i$ are independent but not identically distributed, because $\bd_i$
varies with~$i$, so the ``meat'' matrix is the design average of their covariances. By
identities~\eqref{eq:identities}, $\Var(\bm\chi_i)=\kap(\bd_i-\dbar)(\bd_i-\dbar)^{\!\top}
+\kap\bigl[(\bd_i-\dbar)\dbar^{\!\top}+\dbar(\bd_i-\dbar)^{\!\top}\bigr]
+\kap^2c_2\,\dbar\dbar^{\!\top}$. The cross term averages to zero because $\dbar_n\to\dbar$,
and the first term averages to $\kap(\Qm-\dbar\dbar^{\!\top})$ in the limit, so
\[
\bm{B}_{\mathrm{fe}}=\lim_{n\to\infty}\frac1n\sum_{i=1}^n\Var(\bm\chi_i)
=\kap\,(\Qm-\dbar\dbar^{\!\top})+\kap^2 c_2\,\dbar\dbar^{\!\top},
\]
whence $\Sfe=\Qm^{-1}\bm{B}_{\mathrm{fe}}\Qm^{-1}/\kap^{2}$ equals~\eqref{eq:main}. The second
equality in~\eqref{eq:main} substitutes $\gor=1/(c_2\kap)$ from~\eqref{eq:gor}. The matrix
$\Qm^{-1}\dbar\dbar^{\!\top}\Qm^{-1}=(\Qm^{-1}\dbar)(\Qm^{-1}\dbar)^{\!\top}$ is an outer product,
hence positive semidefinite with rank at most one. The factor $c_2(1-\gor)$ is nonnegative
because $\gor\le1$: by the Cauchy--Schwarz inequality and identities~\eqref{eq:identities},
$1=\Cov[\psi(\veps),\veps]^2\le\Var[\psi(\veps)]\,\Var\veps=\kap\,c_2$.
\end{proof}

\begin{remark}[One term accounts for the dependence between the stages]\label{rem:dependence}
The difference between the feasible and oracle estimators has the influence representation
\begin{equation}\label{eq:diff-influence}
\sqrt{n}\,(\bthfe-\bthor)=\Qm^{-1}\dbar\,\frac1{\sqrt n}\sum_{i=1}^n
\Bigl[\veps_i-\frac{\psi(\veps_i)}{\kap}\Bigr]+o_p(1),
\end{equation}
which is asymptotically uncorrelated with the oracle influence, and its covariance equals the term
in~\eqref{eq:main} exactly (Lemma~\ref{lem:diff} in Appendix~\ref{app:diff}). The dependence
between the first stage and the moment stage, caused by the use of one sample, is therefore fully
accounted for by this term, and no separate correction for it is needed.
\end{remark}

\begin{remark}[The rank is that of the channel, not the number of estimated moments]\label{rem:rank}
Theorem~\ref{thm:main} can be read in the general form~\eqref{eq:twostep}. The difference between
the efficient and oracle scores equals $-\bm{C}\bm{H}^{-1}(\bm{v}_i+\bm{D}\bm{\ell}_i)$ and lies
in the range of the cross Jacobian $\bm{C}$. If it is uncorrelated with the oracle score (as it
is here, by Remark~\ref{rem:dependence}), then
\[
\Sfe-\Sor=\bm{A}^{-1}\bm{C}\bm{H}^{-1}\,\Var(\bm{v}_i+\bm{D}\bm{\ell}_i)\,\bm{H}^{-\top}\bm{C}^{\!\top}\bm{A}^{-\top},
\qquad
\operatorname{rank}(\Sfe-\Sor)\le\operatorname{rank}\bm{C}.
\]
In our scheme, by~\eqref{eq:nonorth}, the cross Jacobian with respect to the block of centering
means equals $-\dbar\,\bh^{\!\top}$. It is an outer product, with rank one for every $S$ when $\dbar\ne\bm0$ and rank zero otherwise,
although all $S$ sign-preserving centering means are estimated. The one-dimensional influence
channel is a property of how the estimator is built: the weight vector $\bh=\Fm^{-1}\bb$
aggregates the centering means into one scalar $\bh^{\!\top}\bmm$ before they enter the
estimating equation. Entering each centering mean separately does not by itself raise the rank.
If the moment conditions have the form $\bd_i\otimes\bz(\veps_i)$ (instruments shared by all
basis components) and the weight is the efficient matrix, that matrix reproduces the same
aggregation through $\bh$, and the increment again has rank at most one. A rank above one arises only
when different basis components have different instruments. In that case, however, the
uncorrelatedness condition above also fails, and positive semidefiniteness of $\Sfe-\Sor$ is no
longer guaranteed (Section~\ref{sec:rank-contrast} gives a numerical example).
\end{remark}

\subsection{Consequences}\label{sec:consequences}

\begin{corollary}[Direction of the increment]\label{cor:direction}
For any $\bm{a}\in\R^p$,
\[
\bm{a}^{\!\top}(\Sfe-\Sor)\bm{a}=c_2(1-\gor)\,\bigl(\bm{a}^{\!\top}\Qm^{-1}\dbar\bigr)^2 ,
\]
and this quantity is zero for all $\bm{a}\perp\Qm^{-1}\dbar$: linear combinations of the
parameters orthogonal to $\Qm^{-1}\dbar$ are estimated with oracle accuracy even in the feasible
regime. If, in addition, $\dbar\ne\bm 0$ and $\gor<1$, the increment is nonzero and concentrated
in the \emph{single} direction $\bm{u}=\Qm^{-1}\dbar/\lVert\Qm^{-1}\dbar\rVert$. If $\dbar=\bm 0$
or $\gor=1$, the increment matrix is zero, there is no increment in any direction, and no
distinguished direction exists.
\end{corollary}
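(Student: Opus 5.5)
The plan is to read the corollary off Theorem~\ref{thm:main} by evaluating the quadratic form of the increment~\eqref{eq:main} along an arbitrary vector $\bm{a}$. Write $\bm{w}=\Qm^{-1}\dbar$. Since $\Qm$ is symmetric, $\Qm^{-1}\dbar\dbar^{\!\top}\Qm^{-1}=\bm{w}\bm{w}^{\!\top}$. Therefore
\[
\bm{a}^{\!\top}(\Sfe-\Sor)\bm{a}=c_2(1-\gor)\,\bm{a}^{\!\top}\bm{w}\bm{w}^{\!\top}\bm{a}=c_2(1-\gor)\,(\bm{a}^{\!\top}\bm{w})^2 ,
\]
which is the displayed identity. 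When $\bm{a}\perp\bm{w}$ this quantity vanishes, so the contrast $\bm{a}^{\!\top}\bth$ has the same asymptotic variance in the feasible and oracle regimes. To justify the phrase ``estimated with oracle accuracy'' beyond equality of variances, I would also invoke Remark~\ref{rem:dependence}. By~\eqref{eq:diff-influence}, the difference $\bm{a}^{\!\top}\sqrt n(\bthfe-\bthor)$ equals $(\bm{a}^{\!\top}\bm{w})$ times an $O_p(1)$ scalar average, plus $o_p(1)$. It is therefore $o_p(1)$, so the contrast even shares the oracle influence function.

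For the nondegenerate case, assume $\dbar\ne\bm 0$ and $\gor<1$. Because $\Qm>0$, its inverse is nonsingular, so $\bm{w}\ne\bm 0$. Also $c_2>0$ by item~3 of Assumption~\ref{ass:reg}, hence $c_2(1-\gor)>0$. The increment matrix is then a positive multiple of $\bm{w}\bm{w}^{\!\top}$. It has rank exactly one, and its range is spanned by $\bm{u}=\bm{w}/\lVert\bm{w}\rVert$. Its unique nonzero eigenvalue is $c_2(1-\gor)\lVert\bm{w}\rVert^2$, with eigenvector $\bm{u}$. Among unit vectors, the quadratic form is maximized exactly at $\pm\bm{u}$, and it is zero on the orthogonal complement. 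This is the sense in which the increment is concentrated in the single direction $\bm{u}$.

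For the degenerate case, take $\dbar=\bm 0$ or $\gor=1$. Then either $\bm{w}=\bm 0$ or the scalar factor is zero, so the increment matrix is the zero matrix. Its range is trivial and no direction is singled out, which gives the last sentence of the statement.

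The argument is essentially linear algebra, and no step should be a real obstacle. The only care needed is in the nondegenerate case: one must state explicitly that positivity of the scalar factor uses $c_2>0$ together with the nonsingularity of $\Qm^{-1}$. If one wants the stronger influence-level reading of ``oracle accuracy,'' one must also cite Lemma~\ref{lem:diff} rather than only the covariance formula.
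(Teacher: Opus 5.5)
Your proposal is correct and follows the paper's proof essentially step for step. Like the paper, you factor the increment as $(\Qm^{-1}\dbar)(\Qm^{-1}\dbar)^{\!\top}$ using symmetry of $\Qm^{-1}$, read off the quadratic-form identity, appeal to \eqref{eq:diff-influence} for the stronger first-order coincidence of contrasts orthogonal to $\Qm^{-1}\dbar$, and split into the cases $\dbar\ne\bm 0,\ \gor<1$ versus $\dbar=\bm 0$ or $\gor=1$. Your eigenvalue and maximization description of ``concentrated in a single direction'' is only a harmless sharpening of the paper's statement that the form vanishes exactly on the orthogonal complement.
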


The identity follows from~\eqref{eq:main} and the symmetry of $\Qm^{-1}$. For
$\bm{a}\perp\Qm^{-1}\dbar$ a stronger statement holds: by~\eqref{eq:diff-influence},
$\sqrt n\,\bm{a}^{\!\top}(\bthfe-\bthor)=o_p(1)$. Such a linear combination of the feasible and
oracle estimators thus coincides to first order, rather than merely having the same asymptotic
variance. Appendix~\ref{app:direction} gives the full proof.

\begin{corollary}[Oracle covariance in feasible Wald statistics]\label{cor:wald}
Under the conditions of Theorem~\ref{thm:main}, let $\bm a\ne\bm0$ be a fixed contrast and
let $\hat\Sor\pto\Sor=\Qm^{-1}/\kap$. A confidence interval for
$\bm a^{\!\top}\bthz$ using the oracle standard error has limiting coverage
\begin{equation}\label{eq:wald-contrast}
2\Phi\!\left(\frac{z_{1-\alpha/2}}{\sqrt{1+\delta_{\bm a}}}\right)-1,
\qquad
\delta_{\bm a}=\frac{c_2(1-\gor)(\bm a^{\!\top}\Qm^{-1}\dbar)^2}
{\bm a^{\!\top}\Sor\bm a},
\end{equation}
where $0<\alpha<1$, $\Phi$ is the standard normal distribution function and $z_{1-\alpha/2}$
its quantile. This coverage is strictly below $1-\alpha$ exactly when $\delta_{\bm a}>0$.
For the full parameter vector, the naive Wald statistic satisfies
\begin{equation}\label{eq:wald-joint}
n(\bthfe-\bthz)^{\!\top}\hat\Sor^{-1}(\bthfe-\bthz)
\dto\sum_{j=1}^{p-1}Z_j^2+(1+\tau)Z_p^2,
\qquad \tau=\frac{1-\gor}{\gor}\,\pi,
\quad \pi=\dbar^{\!\top}\Qm^{-1}\dbar,
\end{equation}
with independent standard normal $Z_j$ and an empty sum for $p=1$. Thus the usual
chi-squared region undercovers when $\tau>0$. Replacing $\hat\Sor$ by a consistent estimate
of $\Sfe$ restores the standard normal and chi-squared limits.
\end{corollary}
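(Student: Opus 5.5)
The plan is to combine the influence representation~\eqref{eq:feas-influence} of Theorem~\ref{thm:main} with Slutsky's lemma. The Lindeberg--Feller central limit theorem for the independent, non-identically distributed summands $\bm\chi_i$ gives
\[
\sqrt n(\bthfe-\bthz)\dto N(\bm0,\Sfe).
\]
The Lindeberg condition holds because $\sup_i\lVert\bd_i\rVert<\infty$ (item~1 of Assumption~\ref{ass:reg}), the summands have finite second moments (items~3 and~5), and the meat matrix $\bm B_{\mathrm{fe}}$ computed in the proof of Theorem~\ref{thm:main} is the limit of the averaged covariances. Everything else is linear algebra on the Gaussian limit.

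For the contrast, write $T_n=\sqrt n\,\bm a^{\!\top}(\bthfe-\bthz)/(\bm a^{\!\top}\hat\Sor\bm a)^{1/2}$. Since $\Sor=\Qm^{-1}/\kap$ is positive definite and $\bm a\ne\bm0$, the denominator converges in probability to a positive limit. Slutsky's lemma then gives
\[
T_n\dto N\!\left(0,\ \bm a^{\!\top}\Sfe\bm a/\bm a^{\!\top}\Sor\bm a\right),
\]
and Corollary~\ref{cor:direction} identifies this variance ratio as $1+\delta_{\bm a}$. Hence $P(|T_n|\le z_{1-\alpha/2})\to 2\Phi\bigl(z_{1-\alpha/2}/\sqrt{1+\delta_{\bm a}}\bigr)-1$. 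The strict inequality when $\delta_{\bm a}>0$ follows because $\Phi$ is strictly increasing and $z_{1-\alpha/2}>0$ for $0<\alpha<1$.

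For the joint statistic, the continuous mapping theorem and $\hat\Sor^{-1}\pto\kap\Qm$ give convergence to $\bm W^{\!\top}\kap\Qm\bm W$ with $\bm W\sim N(\bm0,\Sfe)$. Set $\bm Y=(\kap\Qm)^{1/2}\bm W$. Using~\eqref{eq:main},
\[
\Var\bm Y=\bm I+c_2(1-\gor)\kap\,\Qm^{-1/2}\dbar\dbar^{\!\top}\Qm^{-1/2}=\bm I+\tfrac{1-\gor}{\gor}\,\bm v\bm v^{\!\top},
\]
where $\bm v=\Qm^{-1/2}\dbar$ and $c_2\kap=1/\gor$ by~\eqref{eq:gor}. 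This covariance has eigenvalue $1+\tau$ along $\bm v$, since $\lVert\bm v\rVert^2=\pi$, and eigenvalue $1$ with multiplicity $p-1$ on $\bm v^\perp$. If $\dbar=\bm0$, then $\tau=0$ and any orthonormal basis works. Diagonalizing by an orthogonal matrix yields $\bm Y^{\!\top}\bm Y\overset{d}{=}\sum_{j<p}Z_j^2+(1+\tau)Z_p^2$ with independent $Z_j$, because uncorrelated jointly Gaussian coordinates are independent.

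Undercoverage when $\tau>0$ then follows from stochastic dominance: $(1+\tau)Z_p^2>Z_p^2$ almost surely, so the limiting probability of falling below the $\chi^2_p$ quantile is strictly less than $1-\alpha$, since the limit law is continuous with positive density. Finally, if $\hat\Sor$ is replaced by a consistent estimate of $\Sfe$, which is positive definite because it is $\Sor$ plus a positive semidefinite matrix, the same Slutsky and continuous-mapping argument gives the standard normal and $\chi^2_p$ limits.

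The main obstacle is only a bookkeeping point, not a technical one: the limit must be justified through Lindeberg--Feller for non-identically distributed $\bm\chi_i$ rather than the i.i.d.\ CLT, and the boundary cases $\dbar=\bm0$ or $\gor=1$ must be handled so that the eigen-decomposition does not divide by $\lVert\bm v\rVert$. Both are routine once~\eqref{eq:feas-influence} and~\eqref{eq:main} are taken as given.
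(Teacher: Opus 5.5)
Your proposal is correct and follows essentially the same route as the paper's proof: the Lindeberg--Feller central limit theorem for the non-identically distributed $\bm\chi_i$, then Slutsky's lemma with Corollary~\ref{cor:direction} for the contrast, then whitening by $(\kap\Qm)^{1/2}=\Sor^{-1/2}$, which gives covariance $\bm I$ plus a rank-one term whose single non-unit eigenvalue is $1+\tau$. Your handling of the case $\dbar=\bm0$ and your strict-undercoverage argument via $(1+\tau)Z_p^2>Z_p^2$ almost surely only spell out details that the paper states more briefly.
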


\begin{proof}
We first establish the normal limit. The summands $\bm\chi_i$ in~\eqref{eq:feas-influence} are
independent, and by items~1, 3 and~5 of Assumption~\ref{ass:reg} the vectors $\bd_i$ are
uniformly bounded, the errors are identically distributed, and $\psi(\veps)$ and $\veps$ have
finite second moments. Hence the Lindeberg condition holds for every fixed linear combination of the
$\bm\chi_i$, and the limiting covariance is nonsingular, because
$\bm{B}_{\mathrm{fe}}-\kap\Qm=\kap(\kap c_2-1)\dbar\dbar^{\!\top}$ is positive semidefinite. By
the Lindeberg--Feller theorem and the Cram\'er--Wold device, $\sqrt n\,(\bthfe-\bthz)$ converges
in distribution to a centered normal vector with covariance $\Sfe$.
Corollary~\ref{cor:direction} gives the variance of the contrast. Slutsky's theorem then yields
a normal limit with variance $1+\delta_{\bm a}$ after division by the oracle standard error,
which proves~\eqref{eq:wald-contrast}. For the joint statistic, whiten the limit by
$\Sor^{-1/2}$. Its covariance is $\bm I+\lambda\bm w\bm w^{\!\top}$, with
$\lambda=c_2(1-\gor)$ and $\bm w=\Sor^{-1/2}\Qm^{-1}\dbar$.
All eigenvalues are unity except one, equal to $1+\lambda\lVert\bm w\rVert^2=1+\tau$.
Orthogonal diagonalization proves~\eqref{eq:wald-joint}. For $\tau>0$ the limiting quadratic
form strictly stochastically exceeds a chi-squared variable with $p$ degrees of freedom.
The final assertion follows by whitening with the feasible covariance instead.
\end{proof}

This is a consequence of the covariance identity, not a separate general theory of Wald
inference. For a coordinate parameter, set $\bm a=\bm e_j$: it loses oracle precision if and
only if $\gor<1$ and $(\Qm^{-1}\dbar)_j\ne0$. These limits do not guarantee accurate
finite-sample coverage when the remainders in Assumption~\ref{ass:reg} remain large.

\begin{corollary}[Four sufficient conditions for oracle equivalence]\label{cor:equiv}
First-order oracle equivalence, $\sqrt n(\bthfe-\bthor)\pto\bm 0$, holds in each of the
following cases:
\begin{enumerate}
\item \textbf{known centering means}: $\bmm$ is supplied externally, and only
      $\Fm,\bb,\bh,\kap$ are estimated; for a symmetric law this corresponds to setting
      $\bmm=\bm0$ explicitly, but such a procedure uses an additional assumption of symmetry;
\item \textbf{centered design gradient}: $\dbar=\bm 0$;
\item \textbf{neutral score}: $c_2=1/\kap$, that is, $\gor=1$; then by~\eqref{eq:identities}
      $\veps=\psi/\kap$ in $L_2$, and the right-hand side of~\eqref{eq:diff-influence} is zero;
      this case covers Gaussian neutrality;
\item \textbf{general orthogonality}: $\bm{C}=\bm 0$ in~\eqref{eq:twostep}, or the exact
      cancellation $\bm{C}\bm{H}^{-1}(\bm{v}_i+\bm{D}\bm{\ell}_i)=\bm 0$.
\end{enumerate}
If the centering means are estimated, $\dbar\ne\bm 0$ and $\gor<1$, growth of $n$ alone does not
remove the relative gap: both covariances are divided by $n$, and the factor in~\eqref{eq:main}
remains.
\end{corollary}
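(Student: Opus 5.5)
The plan is to derive all four cases from the difference representation~\eqref{eq:diff-influence} of Remark~\ref{rem:dependence}, or for case~1 from the proof of Theorem~\ref{thm:main}. In each case I will show that the first-order term distinguishing $\bthfe$ from $\bthor$ vanishes. That representation follows from subtracting~\eqref{eq:oracle-influence} from~\eqref{eq:feas-influence}. Using $\bm\chi_i-\bd_i\psi(\veps_i)=\dbar\bigl[\kap\veps_i-\psi(\veps_i)\bigr]$, it gives
\[
\sqrt n(\bthfe-\bthor)=\Qm^{-1}\dbar\,W_n+o_p(1),
\qquad
W_n=\frac1{\sqrt n}\sum_i\Bigl[\veps_i-\frac{\psi(\veps_i)}{\kap}\Bigr].
\]
Here $W_n=O_p(1)$ by the central limit theorem under items~3 and~5 of Assumption~\ref{ass:reg}. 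It therefore suffices to show in each case either that the prefactor $\Qm^{-1}\dbar$ vanishes or that the summands of $W_n$ vanish in $L_2$.

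For case~1, I will rerun the proof of Theorem~\ref{thm:main} with $\hat\bmm$ replaced by the known $\bmm$. The term $\hat\bh^{\!\top}(\hat\bmm-\bmm)$ disappears identically. The remaining term $\bm Z_n(\hat\bh-\bh)$ is $o_p(1)$ by the orthogonality~\eqref{eq:orth} together with consistency (item~8), exactly as in that proof. The feasible step then equals the oracle step up to $o_p(1)$, and the consistency of $\hat\kap$ and $\tilde\Qm_n$ handles the normalizing factors. Case~2 is immediate: $\dbar=\bm0$ kills the prefactor. For the exact-sample version I will note that the remainder involves $\dbar_n$, and $\dbar_n\to\bm0$ times an $O_p(1)$ quantity is $o_p(1)$.

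For case~3, I will compute $\E[(\veps-\psi/\kap)^2]$ using~\eqref{eq:identities}:
\[
c_2-\frac{2}{\kap}+\frac{\kap}{\kap^2}=c_2-\frac1\kap .
\]
This is zero when $\gor=1$. The summands of $W_n$ are then i.i.d.\ with mean zero and zero variance, so $W_n=0$ almost surely. For the Gaussian remark, I will observe that for Gaussian noise, least squares is efficient within the class, so Cauchy--Schwarz is tight and $\kap c_2=1$.

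Case~4 works directly from the general two-step form~\eqref{eq:twostep}. If $\bm C=\bm0$, or the product $\bm C\bm H^{-1}(\bm v_i+\bm D\bm\ell_i)$ is zero, then $\bm s_i^{\mathrm{eff}}=\bm u_i$. The feasible influence function coincides with the oracle one, so the difference of the linear representations is $o_p(1)$.

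The final claim follows from $\Sfe/n-\Sor/n=(\Sfe-\Sor)/n$, whose ratio to $\Sor/n$ is independent of $n$, together with Corollary~\ref{cor:direction}: the gap is nonzero when $\dbar\ne\bm0$ and $\gor<1$.

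The main obstacle is case~4. It is only as rigorous as the high-level expansion in items~7 and~8, since it asserts that the generic sandwich remainder is $o_p(1)$. I will state it as a direct consequence of that assumed expansion rather than prove it from primitives.
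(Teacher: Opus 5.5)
Your proposal is correct and follows the paper's proof case by case. In case~2 the prefactor $\Qm^{-1}\dbar$ vanishes; in case~3, $\Var W=c_2-1/\kap=0$; in case~4 the efficient score reduces to $\bm u_i$; for known centering means the orthogonal term $\bm Z_n(\hat\bh-\bh)$ is $o_p(1)$; and the final claim rests on the $n$-free factor $c_2(1-\gor)$.

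The paper does not actually prove the Gaussian-neutrality remark, and your efficiency argument is only a sketch. Stein's identity $\bb=\Fm\bm e_1/c_2$ makes it immediate, since it gives $\bh=\bm e_1/c_2$ and $\kap=1/c_2$.
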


The proof is given in Appendix~\ref{app:equiv}. In cases~2--4 the leading term
in~\eqref{eq:diff-influence} vanishes; in case~1 the procedure itself changes, and
Lemma~\ref{lem:mm} is not needed.

Conditions~1--4 eliminate the nuisance contribution to the leading influence, but not always
by making its derivative zero. Orthogonality of the derivative is the familiar sufficient
mechanism in \citet{randles1982asymptotic}. In the neutral-score case,
$\veps-\psi/\kap=0$ in $L_2$ even if $\bm C=-\dbar\bh^{\!\top}\ne\bm0$; known centering means
instead remove the estimation error itself. These mechanisms should be distinguished.

\begin{remark}[Noise symmetry is not a fifth condition]\label{rem:symmetry}
For a symmetric law all sign-preserving centering means are zero, so one might expect that they
need not be estimated. However, $\bmm=\bm 0$ does not make the error $\hat\bmm-\bmm$ zero, and
this error enters the equation through the mean design gradient $\dbar$, which does not depend on
the parity of the noise. In the trace metric this can be seen from~\eqref{eq:gfe}. The increment
equals $(1-\gor)\RD$, the product of a factor that depends only on the noise and a factor that
depends only on the design, and the parity of the noise does not enter it. The increment
vanishes only when $\gor=1$ (condition~3) or $\dbar=\bm 0$ (condition~2). Symmetry can be
exploited only through the procedure of condition~1, which sets $\bmm=\bm 0$ explicitly and
thereby introduces symmetry as an assumption. On the same design ($\RD=0.2191$) the largest
increment among all scenarios of Section~\ref{sec:numeric} belongs to the symmetric law S075
($0.169$ at $S=3$), whereas for the asymmetric A1L it is only $0.0013$ at $S=2$; the polynomial
degree affects both values only through $\gor$. The zero increment in scenarios A0 and S200 is
due to Gaussian neutrality (condition~3), not to symmetry. Table~\ref{tab:coverage-s3} confirms
this on the behavior of the estimator.

In practice, whether condition~2 holds is decided by the geometry of the window, not by the noise
distribution. With a constant envelope and a record covering a whole number of periods, the node
averages of $\cos(\Omega t+\Phi)$ and $\sin(\Omega t+\Phi)$ are zero, and so are two of the three
coordinates of $\dbar$; the third coordinate, which contains the factor $t$, is relatively small.
On the benchmark window $t\in[0.01,\,2]$ the period is incomplete, and $\RD$ varies almost
thirtyfold across designs (Table~\ref{tab:rd}).
\end{remark}

\begin{remark}[The increment direction as a projection of the constant]\label{rem:projection}
Since $\dbar_n=n^{-1}\sum_i\bd_i\cdot1$ and $\Qm_n=n^{-1}\sum_i\bd_i\bd_i^{\!\top}$, the vector
$\Qm_n^{-1}\dbar_n$ solves the least-squares problem
\begin{equation}\label{eq:projection}
\Qm_n^{-1}\dbar_n=\arg\min_{\bm a\in\R^p}\frac1n\sum_{i=1}^n\bigl(1-\bd_i^{\!\top}\bm a\bigr)^2,
\qquad
\pi_n=\dbar_n^{\!\top}\Qm_n^{-1}\dbar_n
=1-\min_{\bm a}\frac1n\sum_{i=1}^n\bigl(1-\bd_i^{\!\top}\bm a\bigr)^2\in[0,1].
\end{equation}
It is therefore the vector of coefficients of the projection of the constant function onto the
linear span of the signal gradient components, and $\pi_n$ is the squared norm of this projection
in the design metric; $\pi$ denotes the limit of $\pi_n$. The increment is thus determined by how
well small changes of the signal parameters reproduce a shift of all observations by a constant.
Condition~2 of Corollary~\ref{cor:equiv} means that the constant is orthogonal to the gradient
components ($\pi=0$). The opposite extreme, $\pi=1$, is treated in Corollary~\ref{cor:constant}.
From~\eqref{eq:main}, $\tr\bigl[\Qm(\Sfe-\Sor)\bigr]=c_2(1-\gor)\,\pi$, so in the metric
weighted by~$\Qm$ the variance reduction factor equals $\gor+(1-\gor)\,\pi/p$. Unlike the weight
$\RD$ in~\eqref{eq:gfe}, the quantity $\pi$ is invariant under smooth one-to-one
reparametrization, in particular under a change of measurement units.
\end{remark}

\begin{corollary}[Model with a constant term]\label{cor:constant}
Suppose there is a vector $\bm a_0\in\R^p$ with $\bd_i^{\!\top}\bm a_0=1$ for all $i$. Then
$\Qm^{-1}\dbar=\bm a_0$ and $\pi=1$. In particular, if $r_i(\bth)=s_i(\bm\vartheta)+C$ and
$\bth=(\bm\vartheta^{\!\top},C)^{\!\top}$, then $\Qm^{-1}\dbar=\bm e_C$ and
\[
\Sfe-\Sor=c_2(1-\gor)\,\bm e_C\bm e_C^{\!\top}:
\]
the asymptotic variance of the estimate of the constant term grows by $c_2(1-\gor)$, and the
estimates of $\bm\vartheta$ coincide with the oracle ones to first order.
\end{corollary}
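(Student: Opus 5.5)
The proof has two parts: first an algebraic identity showing $\Qm^{-1}\dbar=\bm a_0$ and $\pi=1$, then a substitution into Theorem~\ref{thm:main} and Remark~\ref{rem:dependence}.

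\emph{The identity.} The hypothesis $\bd_i^{\!\top}\bm a_0=1$ for all $i$ gives
\[
\Qm_n\bm a_0=\frac1n\sum_{i=1}^n\bd_i\bigl(\bd_i^{\!\top}\bm a_0\bigr)=\frac1n\sum_{i=1}^n\bd_i=\dbar_n .
\]
Passing to the limit with item~2 of Assumption~\ref{ass:reg} gives $\Qm\bm a_0=\dbar$. Since $\Qm>0$, this yields $\Qm^{-1}\dbar=\bm a_0$. It follows that
\[
\pi=\dbar^{\!\top}\Qm^{-1}\dbar=\dbar^{\!\top}\bm a_0=\lim_n\frac1n\sum_{i=1}^n\bd_i^{\!\top}\bm a_0=1.
\]
Equivalently, in the language of Remark~\ref{rem:projection}, $\bm a_0$ attains zero in the least-squares problem~\eqref{eq:projection}. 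That problem has a unique minimizer because $\Qm_n$ is nonsingular for large $n$, so the minimizer is $\bm a_0$ and the residual is zero.

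\emph{The constant-term model.} For $r_i(\bth)=s_i(\bm\vartheta)+C$, the last coordinate of $\bd_i$ is $\partial r_i/\partial C=1$. Hence $\bm a_0=\bm e_C$ satisfies the hypothesis. Substituting $\Qm^{-1}\dbar=\bm e_C$ into~\eqref{eq:main} gives
\[
\Sfe-\Sor=c_2(1-\gor)\,\bm e_C\bm e_C^{\!\top}.
\]
The $(C,C)$ entry of this matrix is the stated variance increase $c_2(1-\gor)$. All other entries are zero, so the $\bm\vartheta$ block of the covariance and the cross covariances are unchanged.

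\emph{First-order coincidence for $\bm\vartheta$.} The claim that the $\bm\vartheta$ estimates coincide with the oracle to first order is stronger than equality of variances, so I would take it from Remark~\ref{rem:dependence}. By~\eqref{eq:diff-influence},
\[
\sqrt n(\bthfe-\bthor)=\bm e_C\,\frac1{\sqrt n}\sum_{i=1}^n\Bigl[\veps_i-\frac{\psi(\veps_i)}{\kap}\Bigr]+o_p(1).
\]
Every coordinate of this leading term other than $C$ vanishes, so $\sqrt n(\hat{\bm\vartheta}_{\mathrm{fe}}-\hat{\bm\vartheta}_{\mathrm{or}})=o_p(1)$. Alternatively, this is Corollary~\ref{cor:direction} applied to each $\bm a=\bm e_j$ with $j\ne C$, since every such $\bm e_j$ is orthogonal to $\bm e_C$.

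The only point requiring care is the limit step in the first part. It needs the exact identity $\Qm_n\bm a_0=\dbar_n$ to hold for every $n$, which the hypothesis guarantees, together with nonsingularity of $\Qm$, which item~2 of Assumption~\ref{ass:reg} provides. The rest of the proof is direct substitution.
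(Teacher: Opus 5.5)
Your proposal is correct and takes essentially the same route as the paper: from $\Qm_n\bm a_0=\dbar_n$ you get $\Qm^{-1}\dbar=\bm a_0$ and $\pi=1$, then substitute into~\eqref{eq:main}, and you obtain first-order coincidence for $\bm\vartheta$ from~\eqref{eq:diff-influence}. The only difference is that you pass to the limit before inverting, whereas the paper inverts $\Qm_n$ first, and this does not affect the argument.
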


\begin{proof}
By assumption, $\dbar_n=n^{-1}\sum_i\bd_i\,(\bd_i^{\!\top}\bm a_0)=\Qm_n\bm a_0$, whence
$\Qm_n^{-1}\dbar_n=\bm a_0$ and $\pi_n=\dbar_n^{\!\top}\bm a_0=n^{-1}\sum_i\bd_i^{\!\top}\bm a_0=1$;
passing to the limit gives the same for $\Qm$ and $\dbar$. For an additive constant term
$\partial r_i/\partial C=1$, so $\bm a_0=\bm e_C$, and the formula for the increment follows
from~\eqref{eq:main}. For the last statement, take any $\bm a$ whose coordinate at~$C$ is zero.
Then $\bm a^{\!\top}\Qm^{-1}\dbar=\bm a^{\!\top}\bm e_C=0$, and
representation~\eqref{eq:diff-influence} gives
$\sqrt n\,\bm a^{\!\top}(\bthfe-\bthor)=o_p(1)$, which is the stronger statement following
Corollary~\ref{cor:direction}; equality of asymptotic variances alone would not suffice.
\end{proof}

An example is the four-parameter sine fit of the IEEE~1057 standard, which contains a constant
term in addition to $(A,\Omega,\Phi)$. In the feasible regime the amplitude, frequency and phase
are estimated with first-order oracle accuracy, and the increment falls on the estimate of the
constant term. In the limiting case where the signal is the constant itself ($p=1$, $\Qm=1$),
$\RD=1$ and $\gfe=1$. If all centering means are estimated by sample means from the same
residuals, the values of the centered score sum to zero identically. The refinement step is then
zero, and the feasible estimate coincides with the sample mean for every $S$. The gain survives
if the centering means are given a priori (condition~1 of Corollary~\ref{cor:equiv}), and
whether this is possible depends on the noise distribution. For symmetric noise the centering
means of odd powers are zero, and the classical polynomial maximization method at $S=3$ uses
exactly this assumption. For asymmetric noise at $S=2$ the centering mean of the square is the
noise variance. It is unknown a priori, and estimating it from the same residuals brings back the
sample mean. A simulation check confirms this ($20\,000$ replications, integer-power basis,
weights estimated from the residuals in both variants). With estimated centering the refinement
step is exactly zero in both cases. With centering given a priori, at $n=5000$ the empirical
factor is $0.3008$ for the uniform law at $S=3$ (population value $0.3000$) and $0.5036$ for the
centered exponential law at $S=2$ (population value $0.5000$); the latter uses the true variance,
that is, oracle information.

\begin{remark}[Sample splitting does not help]\label{rem:splitting}
Estimating $\bmm$ independently on another subsample does not give oracle equivalence. It removes
the useful covariance between the score and the moment estimator on the same sample and can
increase the variance. Cross-fitting is needed to control overfitting with complex nuisance
models~\citep{chernozhukov2018double}, but it does not replace the construction of an orthogonal
score.
\end{remark}

\subsection{Relation to the plug-in equivalence result}\label{sec:reconcile}

Theorem~1 and Proposition~3 of \citet{zabolotnii2026sls} answer different questions.
Theorem~1 identifies the optimally weighted SLS slope equation with degree-two PMM, through a
change of coordinates of the moment vector. Proposition~3 establishes consistency and the
oracle limiting variance of the feasible slope for any fixed degree $S$. Its assumptions
A1--A4 require independent identically distributed errors independent of the random design,
finite error and design moments through order $2S$, a nonsingular polynomial moment matrix,
and an intercept. The proof in Supplement~S3 needs consistency of the estimated weights, not
their root-$n$ asymptotic normality. The degree-three efficiency reserve in that paper concerns
enlarging the moment span; it is a separate result from the cost of estimating its moments.

The decisive identity in its feasible-slope proof is
\[
\sum_i(x_i-\bar x)\,\hat{\bm\alpha}_{1:S}=\bm0.
\]
Thus the estimated centering means disappear from the slope equation exactly. The remaining
weight error multiplies a mean-zero sample moment and is negligible to first order. That
argument already establishes the orthogonal-weight mechanism used in~\eqref{eq:orth}.
It does not evaluate a nonzero feasible-minus-oracle covariance correction for the full
parameter vector, nor does its displayed slope limit assert oracle accuracy of the intercept.

The present theorem evaluates that correction when the centering contribution survives.
Its assumptions are different rather than uniformly weaker: the design is fixed, the signal
can be nonlinear and need not contain an intercept, and the basis is~\eqref{eq:basis} rather
than ordinary powers $\veps^j$. In particular, an even signed power is not an ordinary even
power. Assumption~\ref{ass:reg} requires the fractional derivative moments, local conditions
near zero, a root-$n$ preliminary estimate and a uniform stochastic expansion. The latter
conditions are explicitly assumed, not derived here from the moment conditions of the earlier
paper. The refinement is~\eqref{eq:oracle-step}; the earlier construction uses a Newton step
or a locally convergent solution of its slope equation. Consequently, neither theorem is
claimed to contain the other as a formal special case.

Their statistical mechanisms agree. In the present fixed-design setting, an additive intercept
gives $\Qm^{-1}\dbar=\bm e_C$ (Corollary~\ref{cor:constant}), so the correction vanishes on
every slope contrast but may remain on the intercept. Without an intercept, a contrast loses
oracle accuracy precisely when $\gor<1$ and its projection on $\Qm^{-1}\dbar$ is nonzero
(Corollary~\ref{cor:direction}). The absence of an intercept alone is not sufficient:
$\dbar=\bm0$ or a neutral score can still remove the correction. This distinction is relevant
to the harmonic model~\eqref{eq:harmonic}, whose amplitude, frequency and phase need not lie
in the unaffected subspace.

Corollary~\ref{cor:equiv} also explains why, in the school's early work, estimating the amplitude
of a harmonic signal jointly with the noise variance and skewness did not reduce accuracy
(Section~\ref{sec:related-school}). The variance of the amplitude estimate obtained there is
reproduced as an identity in $\gamma_3$ only when the node averages of $\cos(\Omega t_i+\Phi)$ and
$\cos^2(\Omega t_i+\Phi)$ equal zero and $1/2$, respectively, that is, under condition~2 of
Corollary~\ref{cor:equiv}. The absence of accuracy loss in that work is therefore due to the
design, not to the way the noise parameters were estimated.

\section{Consequence for the variance reduction factor}\label{sec:g}

The monograph of \citet{kunchenko2002polynomial} compares estimators by the trace metric relative
to least squares; for nonlinear regression the reference is NLS~\citep{zabolotnii2024application}.
Since the asymptotic covariance of NLS is $c_2\Qm^{-1}$ (Section~\ref{sec:oracle}), the variance
reduction factor of an estimator with asymptotic covariance $\bm\Sigma$ is
$g=\tr\bm\Sigma/(c_2\tr\Qm^{-1})$. For $\bm\Sigma=\Sor$ this gives $\gor$ of~\eqref{eq:gor}, and
$\gfe$ denotes the same ratio for $\bm\Sigma=\Sfe$. We define the design weight
\begin{equation}\label{eq:RD}
\RD=\frac{\lVert\Qm^{-1}\dbar\rVert^{2}}{\tr\Qm^{-1}} .
\end{equation}

\begin{corollary}\label{cor:g}
In the trace metric, the feasible variance reduction factor
$\gfe=\tr\Sfe/(c_2\tr\Qm^{-1})$ equals
\begin{equation}\label{eq:gfe}
\gfe=\gor+(1-\gor)\,\RD .
\end{equation}
\end{corollary}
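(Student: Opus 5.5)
The plan is to take the trace of the covariance identity~\eqref{eq:main} of Theorem~\ref{thm:main} and divide by the least-squares normalizer $c_2\tr\Qm^{-1}$. By linearity of the trace, and using $\Sor=\Qm^{-1}/\kap$,
\[
\tr\Sfe=\frac{\tr\Qm^{-1}}{\kap}+c_2(1-\gor)\,\tr\bigl(\Qm^{-1}\dbar\dbar^{\!\top}\Qm^{-1}\bigr).
\]

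The first term, divided by $c_2\tr\Qm^{-1}$, gives $1/(c_2\kap)=\gor$ by~\eqref{eq:gor}. This confirms that the trace-metric factor of the oracle estimator coincides with the scalar $\gor$, as asserted in Section~\ref{sec:g}.

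For the second term I will use the cyclic property of the trace together with the symmetry of $\Qm^{-1}$:
\[
\tr\bigl((\Qm^{-1}\dbar)(\Qm^{-1}\dbar)^{\!\top}\bigr)=(\Qm^{-1}\dbar)^{\!\top}(\Qm^{-1}\dbar)=\lVert\Qm^{-1}\dbar\rVert^2 .
\]
Dividing $c_2(1-\gor)\lVert\Qm^{-1}\dbar\rVert^2$ by $c_2\tr\Qm^{-1}$ cancels $c_2$ and leaves $(1-\gor)\RD$, with $\RD$ defined in~\eqref{eq:RD}. Adding the two pieces yields~\eqref{eq:gfe}.

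There is no real obstacle here. The one point to check is that the normalization is well defined: $\tr\Qm^{-1}>0$ because $\Qm>0$ by item~2 of Assumption~\ref{ass:reg}, and $c_2>0$ by item~3.

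As a side remark, $\RD\in[0,1]$ can be noted. It follows because $\lVert\Qm^{-1}\dbar\rVert^2=\tr(\Qm^{-1/2}P\,\Qm^{-1/2})$, where $P=\Qm^{-1/2}\dbar\dbar^{\!\top}\Qm^{-1/2}$ has eigenvalue $\pi\le1$ by~\eqref{eq:projection}, so $P\preceq \bm I$. Consequently $\gfe$ lies between $\gor$ and $1$. This bound is not needed for the identity itself.
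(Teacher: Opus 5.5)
Your proposal is correct and follows the paper's own proof. It takes the trace of \eqref{eq:main}, uses $\tr\Sor=\gor c_2\tr\Qm^{-1}$ and the symmetry of $\Qm^{-1}$ to reduce the rank-one term to $\lVert\Qm^{-1}\dbar\rVert^2$, and divides by $c_2\tr\Qm^{-1}>0$; your side remark that $0\le\RD\le1$ (via $\pi\le1$) is also correct, though not needed for the identity.
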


\begin{proof}
Symmetry of $\Qm^{-1}$ gives $\tr\bigl(\Qm^{-1}\dbar\dbar^{\!\top}\Qm^{-1}\bigr)
=\lVert\Qm^{-1}\dbar\rVert^{2}$, and~\eqref{eq:oracle-influence} and~\eqref{eq:gor} give
$\tr\Sor=\gor\,c_2\tr\Qm^{-1}$. It remains to take the trace of both sides of~\eqref{eq:main} and
divide by $c_2\tr\Qm^{-1}>0$ (items~2--3 of Assumption~\ref{ass:reg}): the left-hand side
becomes $\gfe$ by the definition of $g$ above, and the rank-one term becomes $(1-\gor)\RD$
by~\eqref{eq:RD}.
\end{proof}

Formula~\eqref{eq:gfe} has a simple interpretation. The feasible factor interpolates between the
oracle value and one, and \emph{the interpolation weight does not depend on the noise}. The noise
distribution affects the result only through $\gor$, and the experimental conditions only through
$\RD$. The share of the oracle gain that is lost is therefore set by the design, and it can be
reduced by changing the observation window without changing the estimator or the noise.
Table~\ref{tab:rd} gives the values of $\RD$ for seven designs.

\begin{table}[t]
\caption{Design weight $\RD$ for seven designs. For each row the value was computed under $15$
noise distributions and agrees to machine precision: $\RD$ does not depend on the noise}
\label{tab:rd}
\centering
\begin{tabular}{lccc}
\toprule
Design (window, envelope) & $n=200$ & $n=1000$ & $n=5000$ \\
\midrule
$[0.01,\,2]$, envelope $0.3$ (main)       & $0.2243$ & $0.2200$ & $0.2191$ \\
$[0.01,\,2]$, no envelope                 & $0.2374$ & $0.2344$ & $0.2338$ \\
$[0.01,\,2]$, envelope $0.6$              & $0.2208$ & $0.2152$ & $0.2141$ \\
$[0.01,\,1]$, envelope $0.3$              & $0.0119$ & $0.0118$ & $0.0117$ \\
$[0.01,\,4]$, envelope $0.3$              & $0.3386$ & $0.3432$ & $0.3441$ \\
$[0.01,\,8]$, envelope $0.3$              & $0.0739$ & $0.0738$ & $0.0738$ \\
$[2,\,4]$, envelope $0.3$                 & $0.0990$ & $0.0964$ & $0.0959$ \\
\bottomrule
\end{tabular}
\end{table}

For the main design, $\RD=0.2191$: centering on the same sample keeps about $78.1\%$ of the
oracle gain, and about $21.9\%$ is lost. Across designs $\RD$ ranges from $0.0117$ to $0.3441$,
almost thirtyfold.

\subsection{Design characteristics of other signal classes}\label{sec:classes}

The weight $\RD$ and the quantity $\pi$ of Remark~\ref{rem:projection} depend on the signal model
only through its gradient, so they can be computed for any model of class~\eqref{eq:model}
without simulation. Table~\ref{tab:classes} gives the results for eight models on the window
$t\in[0.01,\,2]$ at $n=5000$.

\begin{table}[t]
\caption{Design characteristics of eight signal models, window $[0.01,\,2]$, $n=5000$. $\RD$ is
the weight in the trace metric~\eqref{eq:RD}, $\pi$ is the squared norm of the projection of the
constant~\eqref{eq:projection}, and $\pi/p$ is the share of the oracle gain lost in the metric
weighted by $\Qm$. The computation is deterministic}
\label{tab:classes}
\centering
\begin{tabular}{lcccc}
\toprule
Signal model & $p$ & $\RD$ & $\pi$ & $\pi/p$ \\
\midrule
harmonic with envelope~\eqref{eq:harmonic}              & $3$ & $0.2191$ & $0.9786$ & $0.3262$ \\
harmonic with envelope and constant term                & $4$ & $0.0068$ & $1.0000$ & $0.2500$ \\
sum of two harmonics                                    & $6$ & $0.0096$ & $1.0000$ & $0.1667$ \\
damped sinusoid                                         & $4$ & $0.0092$ & $0.9995$ & $0.2499$ \\
linear frequency-modulated (chirp) signal               & $4$ & $0.2331$ & $0.5151$ & $0.1288$ \\
Gaussian pulse (amplitude, position, width)             & $3$ & $0.0946$ & $0.7888$ & $0.2629$ \\
linear model without intercept ($t$, $t^2$)             & $2$ & $0.2774$ & $0.8932$ & $0.4466$ \\
constant                                                & $1$ & $1.0000$ & $1.0000$ & $1.0000$ \\
\bottomrule
\end{tabular}
\end{table}

The table supports three conclusions. First, for the model with a constant term $\pi=1$, and the
direction $\Qm^{-1}\dbar$ coincides with the unit vector of that term to within
$1.4\cdot10^{-14}$, as Corollary~\ref{cor:constant} predicts. Second, on a window shorter than
the period, the gradient of a smooth signal reproduces the constant almost exactly. For the
harmonic signal $\pi=0.9786$, and for the sum of two harmonics and the damped sinusoid $\pi$
differs from one by less than $0.001$. In the metric weighted by $\Qm$ the lost share of the
oracle gain is therefore close to $1/p$; for the harmonic signal it is $0.326$, against $0.219$
in the trace metric. Third, the values of $\RD$ for different models differ by more than an order
of magnitude at similar values of $\pi$. The reason is that the trace metric adds up the
variances of parameters measured in different units and depends on their scale, whereas $\pi$
does not depend on the parametrization. Values of $\RD$ for different models should be compared
with this dependence in mind.

For each of the eight models we checked identity~\eqref{eq:gfe} under $15$ noise distributions
and three sample sizes. In the $312$ of $360$ cells where the ratio is defined, the maximum
deviation of $\bigl(\gfe-\gor\bigr)/(1-\gor)$ from $\RD$ is $1.0\cdot10^{-14}$. This verifies the
algebra of Theorem~\ref{thm:main} for gradients of different structure. The finite-sample
behavior of the estimator for these models has not been studied: the simulations of
Section~\ref{sec:numeric} use only the harmonic signal~\eqref{eq:harmonic}.

\section{Scope and limits}\label{sec:limits}

Assumption~\ref{ass:reg} collects the hypotheses under which all statements are proved. Here we
consider three questions: which signal models it covers, which exponents $q_j$ it actually
admits, and which of the formulas survive if the basis parameter is selected from the data.

\subsection{The class of signal models}\label{sec:signal-class}
Items~1 and~2 of Assumption~\ref{ass:reg} hold for any model that is twice continuously
differentiable in the parameters, with derivatives bounded on the window and a nonsingular limit
matrix $\Qm$. Three cases fall outside this class. The first is a signal that is not smooth in a
parameter, such as a rectangular pulse with an unknown onset time: the gradient does not exist,
and the expansion of the refinement step loses its meaning. The second is an observation window
that grows without bound. For a harmonic signal the frequency estimate then converges faster
than the amplitude and phase estimates~\citep{walker1971estimation}, the matrix $\Qm_n$ with the
single normalization $\sqrt n$ has no nonsingular limit, and item~2 fails. The statements of
this paper concern a fixed window with increasingly dense nodes; the behavior of the increment
under componentwise normalization is not studied here. The third is an ill-conditioned matrix
$\Qm$. The assumption then holds formally, but first-order asymptotics set in only at larger $n$.
Among the models of Table~\ref{tab:classes}, the condition number of $\Qm$ is $18$ for the
harmonic signal and $2.4\cdot10^{3}$ for the sum of two harmonics.

\subsection{Negative orders of derivative moments}\label{sec:neg-orders}
The order of the moment in $b_j=q_j\,\E|\veps|^{q_j-1}$ can be negative. If the density is
positive and bounded near zero, the moment exists only for $q_j>0$, and the unsmoothed sample
moment has finite variance at rate $\sqrt n$ only for $q_j>1/2$. At the same time the estimate of
$\bb$ enters the first-order covariance orthogonally (see~\eqref{eq:orth}), so consistency of
$\hat\bh$ and $\hat\kap$ suffices, and rate $\sqrt n$ is not needed. The current implementation,
with smoothing-truncation at the level $\epsilon_n=\hat\sigma\,n^{-3/4}$, is consistent for
$q_j-1>-1$. If the smoothing bias is also required to be negligible at rate $\sqrt n$, one needs
in addition $\sqrt n\,\epsilon_n^{q_j}\to0$, that is, $q_j>2/3$ for the exponent $3/4$.

The rule $q_j>1/2$ is therefore correct for finite variance of the unsmoothed estimator of a
derivative moment. It is, however, \emph{neither necessary} for consistency of the weights
\emph{nor sufficient} for the smoothing bias of the current implementation to be negligible at
rate $\sqrt n$. All statements of the paper take this distinction into account. At $\rho=1/2$
all exponents equal one, the basis columns coincide, and the basis degenerates; at $\rho<1/2$ at
least one exponent is below one, and the vector $\bb$ requires absolute moments of negative order.
For $S=3$ at $\rho=0.3$, the value used in scenarios S075--S200 of Section~\ref{sec:numeric},
$q_3\approx0.573$: the condition $q_j>1/2$ holds, but $q_j>2/3$ does not.
Section~\ref{sec:s3} discusses the consequences.

\subsection{Selecting the basis parameter from the data}\label{sec:rho-select}
Formulas~\eqref{eq:main}--\eqref{eq:gfe} are proved for a \emph{fixed} basis parameter. For an
estimator that selects $\rho$ from the data, first-order asymptotics survive only under strict
conditions. These are a fixed or controlled refining grid of candidates, a unique minimizer of the
population criterion separated from the second-best candidate by a positive gap, convergence of
the estimates uniformly in $\rho$, a probability of correct selection tending to one, and
asymptotic inactivity of the safeguards. If the minimizer is not unique or the selection gap is
comparable to the sampling noise, the limit law becomes a post-selection mixture, and the fixed
sandwich cannot be used as the covariance of the data-driven estimator. The results of
Section~\ref{sec:numeric} do not test this, because the basis parameter is fixed there; the
deviation described in Section~\ref{sec:s3} has another source.

\section{Numerical study}\label{sec:numeric}

\subsection{Setup}\label{sec:setup}

The simulations answer two questions. Is the increment concentrated in the direction
$\Qm^{-1}\dbar$, and does it have the predicted size? Does formula~\eqref{eq:sigmahat} give
nominal coverage of confidence regions? A separate check locates where first-order agreement
ends. Table~\ref{tab:protocol} lists the checks, their design and the condition under which the
answer counts as positive.

\begin{table}[t]
\caption{Checks of Section~\ref{sec:numeric}: question, design and the condition under which the
answer counts as positive}
\label{tab:protocol}
\centering
\begin{tabular}{p{0.26\textwidth}p{0.32\textwidth}p{0.32\textwidth}}
\toprule
Question & Design & Positive answer if \tabularnewline
\midrule
\raggedright Is the increment concentrated along $\bm u$ (Sects.~\ref{sec:direction},~\ref{sec:rho-panel})? &
\raggedright $S=2$, scenarios A2--A4, $n=5000$, $M=1000$; optimal $\rho$ and a grid of fixed $\rho$ &
\raggedright along $\bm u$ the variance ratio exceeds one and agrees with the prediction; on the
complement it is close to one, and $\lambda_2/\lambda_1$ is close to zero \tabularnewline\addlinespace
\raggedright Does the structure of the moment conditions set the rank of the excess (Sect.~\ref{sec:rank-contrast})? &
\raggedright linear model without intercept, three schemes of conditions, $n=2000$, $10\,000$ replications &
\raggedright rank one with shared instruments and a sign-indefinite excess with a separate one \tabularnewline\addlinespace
\raggedright Does the trace metric agree with~\eqref{eq:gfe} (Sect.~\ref{sec:trace})? &
\raggedright nine scenarios, $S=2$, $n=5000$ &
\raggedright the prediction lies in the $95\%$ interval \tabularnewline\addlinespace
\raggedright Does~\eqref{eq:sigmahat} give nominal coverage (Sect.~\ref{sec:coverage})? &
\raggedright $S=2$ and $S=3$, $n$ from $200$ to $5000$, three covariance estimates &
\raggedright the closed form covers close to $0.95$, and the naive estimate undercovers where the
increment is nonzero \tabularnewline\addlinespace
\raggedright Does $\RD$ depend on the noise distribution (Sect.~\ref{sec:rd-num})? &
\raggedright $7$ designs, $3$ sample sizes, $15$ distributions, deterministic computation &
\raggedright identity~\eqref{eq:gfe} holds to machine precision \tabularnewline\addlinespace
\raggedright Where does first-order agreement end (Sect.~\ref{sec:s3})? &
\raggedright $S=3$, six symmetric scenarios, $\rho\in\{0.3,\,1\}$ &
\raggedright diagnostic, no criterion \tabularnewline
\bottomrule
\end{tabular}
\end{table}

The test problem is the harmonic signal~\eqref{eq:harmonic} with an envelope on a window holding
an incomplete period, in noise whose asymmetry and tails are set separately. The window is
$t\in[0.01,\,2]$, and the envelope amplitude is $0.3$. The noise follows the asymmetric
exponential power distribution~\citep{zhu2009properties}, standardized to zero mean and unit
variance, in nine scenarios (A0--A4; the suffixes L and R give the direction of asymmetry). For
$S=3$ we use the symmetric scenarios S075--S500 (the number is the shape exponent times~$100$),
because the factor $g_3$ was derived for symmetric errors. The first stage is constrained
NLS~\citep{handel2000properties}. The basis is fixed, and $\rho$ equals the optimum of the
population criterion. The sample sizes are $n\in\{200,500,1000,5000\}$ with $M=1000$
replications; confidence intervals come from a paired bootstrap with $1000$ resamples.

\subsection{Direction of the increment}\label{sec:direction}

The increment is concentrated along $\bm u=\Qm^{-1}\dbar/\lVert\cdot\rVert$, has the predicted
size and is absent on the orthogonal complement (Table~\ref{tab:dir}), as
Corollary~\ref{cor:direction} requires. To check this, we projected the empirical covariance of
the feasible estimator on $\bm u$ and on an orthonormal basis of the complement and compared it
with the covariance of the oracle estimator at the same $\rho$.

\begin{table}[t]
\caption{Along $\bm u$ the variance ratio exceeds one and agrees with the prediction; on the
orthogonal complement it is close to one. Ratio of the empirical variance of the feasible estimator
to that of the oracle estimator, $n=5000$, $M=1000$; ``predicted'' is the value from
Theorem~\ref{thm:main}. The last column is the ratio of the second eigenvalue of the excess matrix
to the first, which is zero for exact rank one}
\label{tab:dir}
\centering
\begin{tabular}{lccccc}
\toprule
Scenario & $\gor$ & \multicolumn{2}{c}{along $\bm u$} & on complement & $\lambda_2/\lambda_1$ \\
\cmidrule(lr){3-4}\cmidrule(lr){5-5}
 & & predicted & empirical & empirical & \\
\midrule
A2L & $0.9025$ & $1.0370$ & $1.0300$ & $1.0012$ & $0.011$ \\
A2R & $0.9025$ & $1.0370$ & $1.0301$ & $1.0014$ & $0.010$ \\
A4L & $0.7843$ & $1.0943$ & $1.0744$ & $1.0016$ & $0.006$ \\
A4R & $0.7843$ & $1.0943$ & $1.0756$ & $1.0021$ & $0.008$ \\
\midrule
A3L & $0.9943$ & $1.0020$ & $0.9964$ & $0.9991$ & --- \\
A3R & $0.9943$ & $1.0020$ & $0.9960$ & $0.9991$ & --- \\
\bottomrule
\end{tabular}
\end{table}

In scenarios A2 and A4 the ratio along $\bm u$ ranges from $1.030$ to $1.076$, and on the
complement it does not exceed $1.002$. The second eigenvalue of the excess is at most $1.1\%$ of
the first, which agrees with rank one. In scenarios A3 the predicted increment is only $0.2\%$.
Both ratios there are indistinguishable from one, and the eigenvalues of the excess have no
meaningful interpretation, so we do not report them.

\subsection{Direction of the increment at fixed $\rho$}\label{sec:rho-panel}

The direction and size of the increment are not tied to the choice of $\rho$. At every fixed
value of the basis parameter the predicted ratio along $\bm u$ lies in the $95\%$ interval,
whereas the oracle value one lies outside it (Fig.~\ref{fig:rho-panel}). This check was needed because
Theorem~\ref{thm:main} is proved for \emph{any} fixed $\rho$ outside the coalescence band,
whereas Table~\ref{tab:dir} covers only the optimal one. On the grid
$\rho\in\{0,\,0.1,\,\dots,\,0.4,\,0.6,\,\dots,\,1\}$ the same replications serve all values of
$\rho$: one NLS estimate per replication, and the feasible and oracle steps on the same data,
with $n=5000$ and $M=1000$. The interval is a paired bootstrap over replications.

\begin{figure}[t]
\centering
\includegraphics[width=\textwidth]{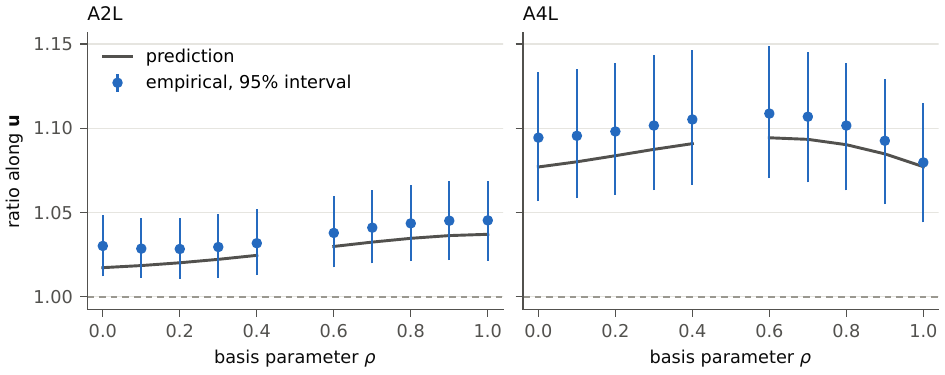}
\caption{The predicted ratio lies in the $95\%$ interval at every fixed $\rho$, whereas the
oracle value one lies outside it. Ratio of the variance of the feasible estimator to that of the oracle estimator along
$\bm u$, $S=2$, $n=5000$, $M=1000$. The line is the value from Theorem~\ref{thm:main}, the points
are empirical values with $95\%$ paired-bootstrap intervals, and the dashed line marks one. The
point $\rho=1/2$, where the basis degenerates, is excluded. The mirror scenarios A2R and A4R,
simulated with the same random numbers, differ from those shown by at most $0.0012$}
\label{fig:rho-panel}
\end{figure}

The condition holds in all $40$ cells (four scenarios times ten values of $\rho$), and the lower
bound of the interval never falls below $1.0107$. On the orthogonal complement the ratio lies
within $0.9885$--$1.0071$, and $\lambda_2/\lambda_1$ does not exceed $0.102$. The largest value
occurs for A2 at small $\rho$, where the increment itself is smallest. The empirical value exceeds
the predicted one in all cells. All cells, however, were computed on the same replications, so
this is one realization rather than forty independent ones; in another run (Table~\ref{tab:dir})
the empirical values were below the predicted ones.

\subsection{Rank of the excess under different schemes of moment conditions}\label{sec:rank-contrast}

The structure of the estimating equations determines the rank of the excess. As long as all
basis components share instruments, the rank is one, and a separate instrument for one component
makes the excess sign-indefinite. Remark~\ref{rem:rank} distinguishes three ways of entering the
centering means into the estimating equations: aggregated, through the scalar
$\bh^{\!\top}\bmm$; componentwise, with shared instruments and the efficient weight; and a scheme
in which one basis component has its own instrument. The contrast does not depend on the signal
model, so we checked it on a linear model without intercept with two regressors, the basis
$\veps,\veps^2,\veps^3$ and the centered exponential law ($n=2000$, $10\,000$ replications, two
seeds). For the aggregated and componentwise variants $|\lambda_2/\lambda_1|$ does not exceed
$0.001$. For the variant with a separate instrument for $\veps^3$ the ratio ranges from $-0.060$
to $-0.057$. The second eigenvalue is negative, so in one direction the feasible estimator is
more accurate than the oracle one.

\subsection{Prediction against simulation in the trace metric}\label{sec:trace}

The trace metric is consistent with the theorem, but at $M=1000$ it barely distinguishes the
theorem from the oracle alternative (Table~\ref{tab:trace}). Prediction~\eqref{eq:gfe} lies in
the bootstrap interval in all nine scenarios. In scenarios A2 and A4, however, where the
increment is visible, the interval is $0.05$--$0.07$ wide, and in eight of the nine scenarios it
also contains the oracle value $\gor$. The main confirmation of the theorem is therefore the
direction check (Section~\ref{sec:direction}); the trace metric merely agrees with it.

\begin{table}[t]
\caption{Prediction~\eqref{eq:gfe} lies in the bootstrap interval in all nine scenarios, but so
does the oracle value, except in A4R (last column). Variance reduction factor at $S=2$,
$n=5000$, $M=1000$; the prediction uses~\eqref{eq:gfe} with $\RD=0.2191$}
\label{tab:trace}
\centering
\begin{tabular}{lccccc}
\toprule
Scenario & $\gor$ & predicted $\gfe$ & empirical & $95\%$ interval & $\gor$ in interval? \\
\midrule
A0  & $1.0000$ & $1.0000$ & $1.0043$ & $[0.9977,\,1.0103]$ & yes \\
A1L & $0.9940$ & $0.9953$ & $0.9985$ & $[0.9867,\,1.0104]$ & yes \\
A1R & $0.9940$ & $0.9953$ & $0.9982$ & $[0.9865,\,1.0108]$ & yes \\
A2L & $0.9025$ & $0.9239$ & $0.9190$ & $[0.8927,\,0.9466]$ & yes \\
A2R & $0.9025$ & $0.9239$ & $0.9208$ & $[0.8978,\,0.9472]$ & yes \\
A3L & $0.9943$ & $0.9956$ & $0.9923$ & $[0.9857,\,0.9993]$ & yes \\
A3R & $0.9943$ & $0.9956$ & $0.9929$ & $[0.9860,\,1.0000]$ & yes \\
A4L & $0.7843$ & $0.8316$ & $0.8180$ & $[0.7831,\,0.8525]$ & yes \\
A4R & $0.7843$ & $0.8316$ & $0.8221$ & $[0.7870,\,0.8594]$ & no \\
\bottomrule
\end{tabular}
\end{table}

\subsection{Coverage of confidence regions}\label{sec:coverage}

Where the increment is nonzero, the naive covariance estimate covers less often than closed
form~\eqref{eq:sigmahat}, and the shortfall grows with the increment; where there is no
increment, the two forms coincide. Noise symmetry does not change this
(Tables~\ref{tab:coverage} and~\ref{tab:coverage-s3}).

We compare three covariance estimates. The first is the closed form, the practical consequence of
the theorem: for a fixed $\rho$ and centering on the same sample,
\begin{equation}\label{eq:sigmahat}
\hat\Sfe=\hat\Qm^{-1}/\hat\kap
+\Bigl(\hat c_2-\frac1{\hat\kap}\Bigr)\hat\Qm^{-1}\hat\dbar\hat\dbar^{\!\top}\hat\Qm^{-1},
\qquad
\hat\Qm=\frac1n\sum_i\hat\bd_i\hat\bd_i^{\!\top},\quad
\hat\dbar=\frac1n\sum_i\hat\bd_i,
\end{equation}
where $\hat\bd_i$ is the signal gradient at $\bthfe$ and $\hat\kap$, $\hat c_2$ come from step~3
of Section~\ref{sec:feasible}; the covariance of the estimate is $\hat\Sfe/n$. The second is the empirical sandwich, which
does not impose the identities of the optimal score. It is an alternative whose finite-sample
coverage is assessed below:
\begin{equation}\label{eq:sigmaemp}
\hat{\bm\chi}_i=(\hat\bd_i-\hat\dbar)\hat\psi_i+\hat\dbar\,\hat\kap\,\check\veps_i,
\qquad
\hat{\bm B}=\frac1n\sum_i\hat{\bm\chi}_i\hat{\bm\chi}_i^{\!\top},
\qquad
\hat{\bm\Sigma}_{\mathrm{emp}}=\hat\Qm^{-1}\hat{\bm B}\hat\Qm^{-1}/\hat\kap^{2},
\end{equation}
where $\check\veps_i=y_i-r_i(\bthfe)$ are the residuals of the final estimate and
$\hat\psi_i=\hat\bh^{\!\top}[\bphi(\check\veps_i)-\hat\bmm]$.
The third is the naive estimate $\hat\Qm^{-1}/\hat\kap$, which omits the increment term.

\begin{table}[t]
\caption{With a nonzero increment (A2L, A4L) the naive estimate covers worse than the closed form
at $n\ge1000$; without an increment or with a negligible one (A0, A3L) the two coincide. Empirical coverage of the joint
$95\%$ confidence region ($\chi^2$ with three degrees of freedom), $M=1000$ replications per
cell. ``Closed'' is~\eqref{eq:sigmahat}, ``empirical'' is~\eqref{eq:sigmaemp}, and ``naive'' is
$\hat\Qm^{-1}/\hat\kap$. The Monte Carlo standard error at coverage $0.95$ is $0.007$. Bold marks
the form whose coverage is closest to $0.95$, together with every form within two standard errors
($0.014$) of it; a row in which all three forms lie within this band has no bold. In three cells
at $n=200$ (A0, A2L, A4L) one or two replications were discarded because the moment specification
was inadmissible; all other cells use all $1000$}
\label{tab:coverage}
\centering
\begin{tabular}{llccc}
\toprule
Scenario & $n$ & closed \eqref{eq:sigmahat} & empirical \eqref{eq:sigmaemp} & naive \\
\midrule
A0 (increment $0$)       & $200$ & $\mathbf{0.898}$ & $0.879$ & $\mathbf{0.892}$ \\
                         & $500$ & $0.929$ & $0.923$ & $0.928$ \\
                         & $1000$ & $0.948$ & $0.942$ & $0.948$ \\
                         & $5000$ & $0.936$ & $0.939$ & $0.936$ \\
\midrule
A3L (increment $0.001$)  & $200$ & $\mathbf{0.924}$ & $0.906$ & $\mathbf{0.923}$ \\
                         & $500$ & $0.941$ & $0.942$ & $0.941$ \\
                         & $1000$ & $0.937$ & $0.937$ & $0.937$ \\
                         & $5000$ & $0.951$ & $0.949$ & $0.951$ \\
\midrule
A2L (increment $0.021$)  & $200$ & $\mathbf{0.925}$ & $0.904$ & $\mathbf{0.918}$ \\
                         & $500$ & $0.939$ & $0.938$ & $0.932$ \\
                         & $1000$ & $0.954$ & $0.952$ & $0.948$ \\
                         & $5000$ & $0.951$ & $0.952$ & $0.939$ \\
\midrule
A4L (increment $0.047$)  & $200$ & $0.923$ & $0.910$ & $0.909$ \\
                         & $500$ & $\mathbf{0.944}$ & $\mathbf{0.943}$ & $0.929$ \\
                         & $1000$ & $0.951$ & $0.951$ & $0.938$ \\
                         & $5000$ & $\mathbf{0.946}$ & $\mathbf{0.946}$ & $0.931$ \\
\bottomrule
\end{tabular}

\end{table}

At $S=2$, in the scenario without an increment (A0) and in the one with a negligible increment
(A3L), the closed and naive forms give the \emph{same} coverage to three decimals at $n\ge1000$.
The correction is not needed there, and it does not degrade coverage. In scenarios A2L and A4L
at $n\ge1000$ the coverage of the naive estimate is lower by $0.006$--$0.015$. Each of these
differences alone amounts to one or two Monte Carlo standard errors. What carries weight is that
the sign is the same in all four cells, whereas in the scenarios without an increment the
difference is zero. At $n=200$ all three estimates undercover ($0.879$--$0.925$), the empirical
form~\eqref{eq:sigmaemp} most of all. At this sample size the finite-sample error of the
refinement step and of the sandwich itself dominates, rather than the choice of covariance
formula.

\begin{table}[t]
\caption{At $n=5000$ the closed form holds the nominal level in all four symmetric scenarios, and
the shortfall of the naive estimate grows with the increment. Coverage of the joint $95\%$
confidence region at $S=3$ in the symmetric scenarios, $M=1000$ replications per cell; column
labels and the bold rule as in Table~\ref{tab:coverage}. ``Increment'' is $(1-\gor)\RD$ in the
trace metric. At $n=200$, $976$--$997$ replications were accepted, at $n=500$, $999$--$1000$,
and all $1000$ at larger $n$}
\label{tab:coverage-s3}
\centering
\begin{tabular}{llccc}
\toprule
Scenario & $n$ & closed \eqref{eq:sigmahat} & empirical \eqref{eq:sigmaemp} & naive \\
\midrule
S300 (increment $0.026$) & $200$ & $\mathbf{0.907}$ & $0.880$ & $\mathbf{0.899}$ \\
                         & $500$ & $0.932$ & $0.919$ & $0.921$ \\
                         & $1000$ & $0.931$ & $0.923$ & $0.922$ \\
                         & $5000$ & $0.957$ & $0.958$ & $0.954$ \\
\midrule
S500 (increment $0.086$) & $200$ & $\mathbf{0.914}$ & $0.869$ & $0.878$ \\
                         & $500$ & $\mathbf{0.933}$ & $0.910$ & $0.882$ \\
                         & $1000$ & $\mathbf{0.946}$ & $0.930$ & $0.901$ \\
                         & $5000$ & $\mathbf{0.954}$ & $\mathbf{0.951}$ & $0.908$ \\
\midrule
S100 (increment $0.107$) & $200$ & $\mathbf{0.704}$ & $\mathbf{0.693}$ & $0.588$ \\
                         & $500$ & $\mathbf{0.846}$ & $\mathbf{0.849}$ & $0.772$ \\
                         & $1000$ & $\mathbf{0.888}$ & $\mathbf{0.894}$ & $0.820$ \\
                         & $5000$ & $\mathbf{0.943}$ & $\mathbf{0.941}$ & $0.884$ \\
\midrule
S075 (increment $0.169$) & $200$ & $\mathbf{0.766}$ & $\mathbf{0.761}$ & $0.651$ \\
                         & $500$ & $\mathbf{0.885}$ & $\mathbf{0.891}$ & $0.770$ \\
                         & $1000$ & $\mathbf{0.944}$ & $\mathbf{0.945}$ & $0.835$ \\
                         & $5000$ & $\mathbf{0.956}$ & $\mathbf{0.958}$ & $0.804$ \\
\bottomrule
\end{tabular}

\end{table}

At $S=3$, where all noise distributions are symmetric, the closed form gives $0.943$--$0.957$ at
$n=5000$ in all four scenarios. The coverage of the naive estimate falls short by an amount that
grows with the predicted increment, from $0.003$ in S300 to $0.152$ in S075; across the four
scenarios the correlation between the shortfall and the increment is $0.973$, a descriptive
figure computed from four points. If noise symmetry
made the increment vanish, all four shortfalls would be zero (Remark~\ref{rem:symmetry}).
Finite-sample effects are stronger at $S=3$ than at $S=2$. At $n\le500$ all three forms
undercover in scenarios S075 and S100 ($0.588$--$0.891$), and at $n=1000$ the closed form gives
only $0.888$ in S100. Nominal coverage of the closed form at $S=3$ is thus established here only
for $n=5000$.

\subsection{The design nature of the weight $\RD$}\label{sec:rd-num}

The weight $\RD$ does not depend on the noise distribution: the identity
$\bigl(\gfe-\gor\bigr)/(1-\gor)=\RD$ holds to machine precision on a grid of $7$~designs $\times$
$3$~sample sizes $\times$ $15$~noise distributions. In $273$ of the $315$ cells the ratio is
defined, and its maximum deviation from $\RD$ is $1.9\cdot10^{-14}$. In the remaining $42$ cells
$\gor=1$. There the ratio is undefined, and the identity itself is trivial, because both sides
of~\eqref{eq:gfe} equal one. Table~\ref{tab:rd} gives $\RD$ for the individual designs.

\subsection{Deviation at $S=3$}\label{sec:s3}

At $S=3$ first-order agreement is not reached within the available sample sizes: the prediction
lies outside the bootstrap interval in all six scenarios (Table~\ref{tab:s3}). The sign of the gap
is set by the basis parameter, not by the noise distribution. In the four scenarios with
$\rho=0.3$ (S075--S200) the empirical value exceeds the prediction, and in the two with $\rho=1$
(S300, S500) it falls below it. Two different mechanisms are therefore at work, not one
phenomenon with a varying sign. The S200 row does not test the theorem. There $\gor=1$,
increment~\eqref{eq:main} is identically zero and $\gfe=1$ for any $\RD$, so the gap describes
the estimator itself.

\begin{table}[t]
\caption{The sign of the gap is set by $\rho$, not by the noise distribution. Variance reduction
factor at $S=3$, $n=5000$, $M=1000$, fixed basis parameter; ``gap'' is the empirical value minus
the prediction. In the S200 row $\gor=1$ and the prediction is trivial, so this row describes the
estimator rather than testing the theorem}
\label{tab:s3}
\centering
\begin{tabular}{lccccc}
\toprule
Scenario & $\rho$ & $\gor$ & predicted $\gfe$ & empirical & gap \\
\midrule
S075 & $0.3$ & $0.2273$ & $0.3966$ & $0.4352$ & $+0.0386$ \\
S100 & $0.3$ & $0.5108$ & $0.6180$ & $0.6879$ & $+0.0700$ \\
S150 & $0.3$ & $0.9126$ & $0.9318$ & $1.0228$ & $+0.0911$ \\
S200 & $0.3$ & $1.0000$ & $1.0000$ & $1.0886$ & $+0.0886$ \\
S300 & $1$ & $0.8833$ & $0.9089$ & $0.8823$ & $-0.0266$ \\
S500 & $1$ & $0.6085$ & $0.6943$ & $0.6498$ & $-0.0444$ \\
\bottomrule
\end{tabular}
\end{table}

The gap in S200 is not caused by the refinement step. The oracle estimator with the same step
and the same damping reproduces $g=1$ there to within $6\cdot10^{-8}$, with a mean damping
factor of $1.0$. The classical polynomial maximization method at $S=3$ (integer-power basis) with
the same plug-in mechanism gives $0.9992$. The whole gap of $0.0886$ in this row therefore comes
from estimating the weights of the fractional basis. The choice of $\rho$ plays no part in it,
because the basis parameter is fixed here.

The source of the gap is the finite-sample bias of the \emph{weight} estimates for bases with
$\rho<1/2$. At $\rho=0.3$ the vector $\bb$ requires absolute moments of negative orders
($q_2-1\approx-0.26$, $q_3-1\approx-0.43$), and $q_3$ lies below the bound $2/3$ above which the
smoothing bias is negligible (Section~\ref{sec:limits}). The estimate $\hat g$ produced by the
estimator itself therefore deviates from the population value by $0.07$--$0.12$ at $\rho=0.3$
and by less than $0.002$ at $\rho=1$.

A diagnostic computation supports this mechanism (Fig.~\ref{fig:s3-sweep}). The excess over the
prediction is large for $\rho<1/2$ and small or negative for $\rho>1/2$, where the negative
moment orders disappear. The computation used the grid $\rho\in\{0,\,0.1,\,\dots,\,1\}$ without
the degeneracy point $\rho=1/2$, in scenarios S200 (where $\gor=1$ for any $\rho$) and S100, with
$250$ replications and $n\in\{500,2000,8000\}$. The excess correlates with the absolute bias of
$\hat g$ ($0.71$--$0.97$) and hardly correlates with the logarithm of the condition number of
$\Fm$ (from $-0.20$ to $0.23$). It is thus caused by a systematic error of the weights, not by
conditioning. Growth of $n$ does not guarantee that the excess disappears: in S100 at
$\rho=0.3$ it does not decrease between $n=2000$ and $n=8000$.

\begin{figure}[t]
\centering
\includegraphics[width=\textwidth]{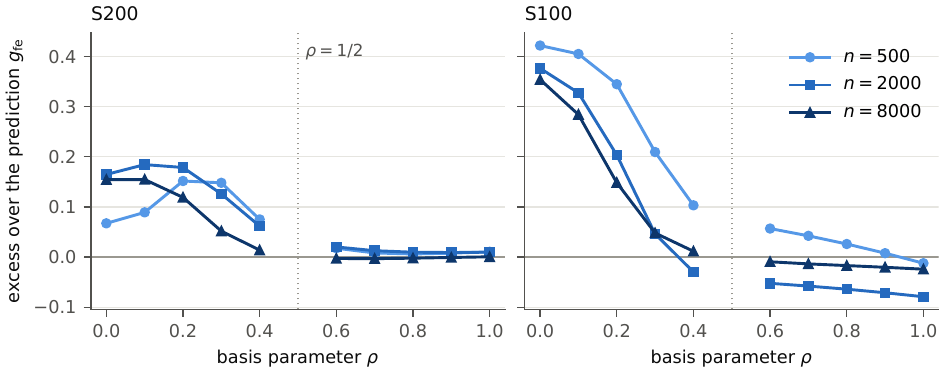}
\caption{The excess is large only for $\rho<1/2$, where the vector $\bb$ needs moments of
negative orders. Difference between the empirical variance reduction factor and the
prediction~$\gfe$ at $S=3$ for fixed $\rho$, $250$ replications (diagnostic run); the point
$\rho=1/2$, where the basis degenerates, is excluded. In S200, $\gor=1$ for any $\rho$}
\label{fig:s3-sweep}
\end{figure}

This does not contradict Theorem~\ref{thm:main}. The theorem concerns the block of centering
means, whereas the gap comes from the block of weights, which is orthogonal by~\eqref{eq:orth}.
Orthogonality is a first-order statement and does not bound the size of the remainder. At
$\rho=0.3$ and $n=5000$, near the Gaussian scenario, this remainder exceeds the first-order
increment that is the subject of the paper: in S150 the gap is $0.091$ against an increment of
$0.019$. In S075 and S100 it is between a quarter and two thirds of the increment
(Table~\ref{tab:s3}). This paper does not propose a correction that would remove the weight bias
at $\rho<1/2$. In the two scenarios with $\rho=1$ the gap is negative. Part of it comes from the
oracle estimator already, whose factor there lies below the population value by $0.020$ and
$0.027$ (Section~\ref{sec:caveat}).

The gap in the trace metric does not invalidate covariance estimate~\eqref{eq:sigmahat}. Computed
with the estimated $\hat\kap$ and $\hat c_2$, it reproduces the actual spread of the estimates at
$n=5000$ in the same symmetric scenarios. The ratio of the mean standard error to the Monte Carlo
spread lies within $0.96$--$1.06$ for all three parameters, and the coverage is nominal
(Table~\ref{tab:coverage-s3}). The gap concerns the comparison with the \emph{population}
prediction, not the internal consistency of the covariance estimate.

\subsection{A caveat on the reference level}\label{sec:caveat}

The rank-one identity concerns asymptotic covariances. Table~\ref{tab:dir} compares the feasible
estimator with the \emph{empirical} oracle estimator using the same refinement step and
population moments; at finite sample size this is a check of the asymptotic prediction, not an
exact rank-one identity for empirical covariance matrices. Relative to the closed form~\eqref{eq:oracle-influence}, both estimators, the
feasible and the oracle one, have an isotropic variance excess of about $1$--$5\%$ at $n=5000$.
It comes from the finite-sample behavior of one refinement step from the NLS estimate under
non-Gaussian laws. In the Gaussian scenario, where the optimal score is linear, the same step is
exact (Section~\ref{sec:s3}). This excess has no distinguished direction and so does not affect
the check of Corollary~\ref{cor:direction}, but it should be kept in mind when interpreting
absolute values.

\section{Discussion}\label{sec:discussion}

The distinction from the earlier PMM--SLS result is between a slope equation from which
estimated centering cancels and a full parameter vector for which it can survive
(Section~\ref{sec:reconcile}). The present contribution is the explicit correction in the
latter setting. Consistency and oracle equivalence are different assertions: when
$\gor<1$ and $\dbar\ne\bm0$, increasing the sample size reduces both covariances by the same
factor but leaves their relative difference. Oracle equivalence can follow from a zero
nuisance derivative, a vanishing nuisance influence, or externally known centering means
(Corollary~\ref{cor:equiv}). These are distinct mechanisms. Symmetry alone does not enforce
any of them for the implemented estimator (Remark~\ref{rem:symmetry}).

The rank bound follows from the scalar aggregation of the centering error, but positivity
requires more: that aggregate is uncorrelated with the oracle influence. The score identities
then give its variance $c_2(1-\gor)$. General two-step theory supplies the expansion; the
rank, sign, direction and closed coefficient together describe what that expansion means for
this estimator. The contrast with different instruments in Section~\ref{sec:rank-contrast}
shows why positivity cannot be attributed to nuisance estimation in general.

For an individual parameter, its coordinate in $\Qm^{-1}\dbar$ decides whether oracle
precision survives. With an additive intercept only its estimate bears the loss. This does
not justify adding a physically unwarranted intercept merely to claim oracle precision:
changing the model also changes its oracle benchmark and its design matrix. Without an
intercept, contrasts orthogonal to the correction direction remain unaffected, whereas other
contrasts require the additional variance. Corollary~\ref{cor:wald} makes the inference
consequence explicit: the naive Wald statistic has a stretched normal or weighted
chi-squared limit. Its undercoverage therefore need not disappear with more observations.
The corrected covariance~\eqref{eq:sigmahat} accounts for this first-order loss;
the empirical alternative~\eqref{eq:sigmaemp} has no general finite-sample superiority.

Design can reduce the centering loss, but the criterion must match the scientific target.
For a specified contrast its additional variance follows from Corollary~\ref{cor:direction}.
The trace weight $\RD$ in~\eqref{eq:gfe} combines parameter coordinates with their chosen
scales; it should not be treated as invariant to changing the units of those parameters.
The squared projection length $\pi=\dbar^{\!\top}\Qm^{-1}\dbar$ is invariant under nonsingular
linear reparameterization and controls the joint Wald distortion.

The results have several limitations. The theorem is proved for a general smooth signal model,
but simulations confirm it only for the harmonic signal with an envelope; for the other models of
Table~\ref{tab:classes} we checked algebraic identities, not the finite-sample behavior of the
estimator. The basis shape was taken as given a priori, and selecting it from the data requires
post-selection theory or a bootstrap of the whole estimation procedure
(Section~\ref{sec:limits}). Only independent identically distributed noise was considered
(item~3 of Assumption~\ref{ass:reg}). The description of correlated
noise~\citep{vokorokos2017parameters} involves two-time cumulant functions, so the influence
expansion given here does not extend to that case. No real measurements are used: the subject of
the paper is an asymptotic statement, which we tested in a controlled computational experiment.
Finally, at $S=3$ and $\rho<1/2$, the loss from estimating the weights, which lies outside
first-order asymptotics, is comparable at attainable $n$ to the loss from centering and exceeds
it near the Gaussian scenario. Section~\ref{sec:s3} describes the mechanism of this effect;
constructing a correction for it remains an open problem.

\section{Conclusions}\label{sec:conclusions}

When polynomial estimation of signal parameters passes from the oracle to the feasible scheme,
the asymptotic covariance grows by exactly one term. For a signal given by a smooth model in a
fixed design, this term is the positive semidefinite matrix
$c_2(1-\gor)\Qm^{-1}\dbar\dbar^{\!\top}\Qm^{-1}$ of rank at most one. Estimating the fractional
moments that form the weights does not affect first-order asymptotics; the increment comes only
from estimating the sign-preserving centering moments. The increment is concentrated in the
single direction $\Qm^{-1}\dbar$, and in the trace metric its weight $\RD$ is set by the
experimental design and does not depend on the noise distribution. The vector $\Qm^{-1}\dbar$
consists of the coefficients of the projection of the constant onto the components of the signal
gradient. If the model contains a constant term, the increment falls on its estimate alone.
Simulations confirm the result for the harmonic signal with an envelope.


\section*{Statements and Declarations}

\noindent\textbf{Funding.} No funding was received for this work.\\
\textbf{Competing interests.} The author declares no competing interests.\\
\textbf{Ethics approval and consent to participate.} Not applicable: the study involves no human
or animal subjects.\\
\textbf{Data availability.} All results were obtained from synthetic Monte Carlo data
(Section~\ref{sec:numeric}); no external data were used.\\
\textbf{Code availability.} The code that reproduces every number, table and figure of the paper,
together with the stored results, is openly available at
\url{https://github.com/SZabolotnii/Ku-PMM-Rank-One-code-supplement} (MIT licence).\\
\textbf{Use of AI-based tools.} AI-based tools (Claude Code, Codex) assisted in implementing and checking
the code, in reconciling the numbers of the manuscript with the stored results, in drafting the
text and translating it from Ukrainian, and in language editing and \LaTeX{} formatting. The
author reviewed and edited the whole text and takes full responsibility for the content of the
article.

\appendix

\section{Proofs}\label{app:proofs}

\subsection{Influence of the estimated centering moments}\label{app:lem-mm}

\begin{proof}[Proof of Lemma~\ref{lem:mm}]
Let $\bm{D}_i=\bd_i-\dbar_n$, so that $\sum_i\bm{D}_i=\bm 0$ by construction, and
$\bar\veps=n^{-1}\sum_i\veps_i$. By item~7 of Assumption~\ref{ass:reg},
$\sqrt n(\bthtil-\bthz)=O_p(1)$, and by item~1 the second design derivatives are uniformly
bounded, so that, uniformly in~$i$,
\[
\hat\veps_i=\veps_i-\bd_i^{\!\top}(\bthtil-\bthz)+O_p(n^{-1}),
\qquad
\hat\veps_i^{c}=(\veps_i-\bar\veps)-\bm{D}_i^{\!\top}(\bthtil-\bthz)+O_p(n^{-1}).
\]
Expanding $\bphi(\hat\veps_i^{c})$ about $\bphi(\veps_i)$ and averaging, we obtain
\begin{equation}\label{eq:mm-expand}
\begin{aligned}
\sqrt n(\hat\bmm-\bmm)&=\frac1{\sqrt n}\sum_i\bz(\veps_i)
-\Bigl[\frac1n\sum_i\bphi'(\veps_i)\Bigr]\sqrt n\,\bar\veps
-\bm{A}_n\sqrt n(\bthtil-\bthz)+o_p(1),\\
\bm{A}_n&=\frac1n\sum_i\bphi'(\veps_i)\bm{D}_i^{\!\top},
\end{aligned}
\end{equation}
where item~8 makes the remainder uniform. The coefficient of $(\bthtil-\bthz)$ is the
\emph{empirical} $\bm{A}_n$, not $\bb\,n^{-1}\sum_i\bm{D}_i^{\!\top}$: the identity
$\sum_i\bm{D}_i=\bm 0$ annihilates only the population part. We split
\[
\bm{A}_n=\bb\,\frac1n\sum_i\bm{D}_i^{\!\top}
+\underbrace{\frac1n\sum_i\bigl[\bphi'(\veps_i)-\bb\bigr]\bm{D}_i^{\!\top}}_{=\;\tilde{\bm{A}}_n}
=\tilde{\bm{A}}_n .
\]
Since $\varphi_j'(\veps)=q_j|\veps|^{q_j-1}$ for $\veps\ne0$, there is no atom at zero and
$\E|\veps|^{q_j-1}<\infty$ (item~6), we have $\E[\varphi_j'(\veps)]=q_j\E|\veps|^{q_j-1}=b_j$.
This coincides with the definition of $\bb$ in~\eqref{eq:score}, so no differentiation under the
expectation is needed. The terms $\bm{w}_i=\bphi'(\veps_i)-\bb$ are therefore independent and
identically distributed, with zero mean and finite first absolute moment, and the weights
$\bm{D}_i$ are uniformly bounded by item~1. For such terms $\tilde{\bm{A}}_n\pto\bm 0$. Under the
truncation
$\bm{w}_i^{M}=\bm{w}_i\mathbf 1\{\lVert\bm{w}_i\rVert\le M\}-\E\bigl[\bm{w}\mathbf 1
\{\lVert\bm{w}\rVert\le M\}\bigr]$ the bounded part has variance of order $O(n^{-1})$
(Chebyshev's inequality), and the tail is bounded in $L_1$ by
$2\sup_i\lVert\bm{D}_i\rVert\,\E\bigl[\lVert\bm{w}\rVert\mathbf 1\{\lVert\bm{w}\rVert>M\}\bigr]$,
which tends to zero as $M\to\infty$ by dominated convergence. Together with
$\sqrt n(\bthtil-\bthz)=O_p(1)$ this gives $\bm{A}_n\sqrt n(\bthtil-\bthz)=o_p(1)$. The
contribution of the preliminary NLS estimate vanishes at this step, not through an algebraic
identity.

The second term in~\eqref{eq:mm-expand} is handled by the law of large numbers:
$n^{-1}\sum_i\bphi'(\veps_i)\pto\bb$ and $\sqrt n\,\bar\veps=n^{-1/2}\sum_i\veps_i=O_p(1)$, so
this term equals $\bb\,n^{-1/2}\sum_i\veps_i+o_p(1)$. Substituting both results
into~\eqref{eq:mm-expand} gives~\eqref{eq:mm-influence}. Centering the residuals on the same
sample thus asymptotically orthogonalizes the signed-moment estimator with respect to the
unknown location. For $q_1=1$ we have $\varphi_1(\veps_i)=\veps_i$, $m_1=\E\veps=0$ and $b_1=1$,
so $z_1(\veps_i)-b_1\veps_i=0$ exactly.
\end{proof}

\subsection{Difference between the feasible and oracle estimators}\label{app:diff}

Representation~\eqref{eq:diff-influence}, on which Remark~\ref{rem:dependence} and the proofs of
both corollaries rely, is derived here separately.

\begin{lemma}\label{lem:diff}
Under the conditions of Theorem~\ref{thm:main}, the difference between the feasible and oracle
estimators has influence representation~\eqref{eq:diff-influence}. The variable
$W=\veps-\psi(\veps)/\kap$ has $\E W=0$ and $\Var W=c_2-1/\kap$ and is uncorrelated with
$\psi(\veps)$. Hence influence~\eqref{eq:diff-influence} is asymptotically uncorrelated with the
oracle influence~\eqref{eq:oracle-influence}, and its asymptotic covariance is
$(c_2-1/\kap)\,\Qm^{-1}\dbar\dbar^{\!\top}\Qm^{-1}$.
\end{lemma}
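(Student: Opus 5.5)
The plan is to subtract the two influence representations and then check the moment claims with the score identities~\eqref{eq:identities}. By Theorem~\ref{thm:main}, $\sqrt n(\bthfe-\bthz)=\Qm^{-1}\kap^{-1}n^{-1/2}\sum_i\bm\chi_i+o_p(1)$. By~\eqref{eq:oracle-influence}, $\sqrt n(\bthor-\bthz)=\Qm^{-1}\kap^{-1}n^{-1/2}\sum_i\bd_i\psi(\veps_i)+o_p(1)$. Both expansions are built on the same preliminary estimate $\bthtil$ and the same sample, so their difference is legitimate with an $o_p(1)$ remainder. Subtracting gives $\bm\chi_i-\bd_i\psi(\veps_i)=-\dbar\,\psi(\veps_i)+\dbar\,\kap\,\veps_i=\kap\,\dbar\,[\veps_i-\psi(\veps_i)/\kap]$. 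After multiplying by $\Qm^{-1}\kap^{-1}$ this is exactly~\eqref{eq:diff-influence}. The step from $\dbar_n$ to $\dbar$ inside the theorem's expansion costs only $o_p(1)$, because $n^{-1/2}\sum_i\psi(\veps_i)=O_p(1)$ and $\dbar_n\to\dbar$.

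Next comes the scalar computation for $W=\veps-\psi/\kap$.
\begin{itemize}
\item The mean is zero because $\E\veps=0$ and $\E\psi=\bh^{\!\top}\E\bz=0$.
\item For the variance, use $\Var\veps=c_2$, $\Var\psi=\kap$ and $\Cov(\psi,\veps)=1$. Then $\Var W=c_2-2/\kap+\kap/\kap^2=c_2-1/\kap$.
\item For uncorrelatedness, $\Cov(W,\psi)=\Cov(\veps,\psi)-\Var\psi/\kap=1-1=0$.
\end{itemize}

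Finally, the cross-covariance of the two normalized sums is $\Qm^{-1}\kap^{-1}\bigl[n^{-1}\sum_i\bd_i\,\Cov(\psi(\veps_i),W_i)\bigr]\dbar^{\!\top}\Qm^{-1}$. This is identically zero for every $n$, since the errors are i.i.d.\ and independent of the fixed design, so the two influences are asymptotically uncorrelated. The covariance of~\eqref{eq:diff-influence} is $\Qm^{-1}\dbar\dbar^{\!\top}\Qm^{-1}\Var W$, which gives the stated matrix.

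As a consistency check, $\Sfe=\Sor+\Var(\text{difference})$ then reproduces~\eqref{eq:main}. This shows the lemma is compatible with Theorem~\ref{thm:main} rather than an independent claim.

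The main point needing care is the first step: it requires that both expansions hold jointly with remainders that are $o_p(1)$ on the same probability space. That is, the oracle and feasible refinement steps must share the same linearization in $\bthtil$, and the $\bthtil$-terms must cancel identically in both. I would handle this by writing both steps as $\bthtil$ plus $\tilde\Qm_n^{-1}\kap^{-1}$ (respectively $\hat\kap^{-1}$) times the score average, and then taking the difference directly. The $\bthtil$-terms then cancel exactly, and the only surviving piece is $-\dbar_n\bh^{\!\top}\sqrt n(\hat\bmm-\bmm)$, as in the proof of Theorem~\ref{thm:main}. Lemma~\ref{lem:mm} then finishes the argument, using $\bh^{\!\top}[\bz-\bb\veps]=\psi-\kap\veps$. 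The replacement of $\hat\kap$ by $\kap$ is negligible because it multiplies an $O_p(1)$ centered score.
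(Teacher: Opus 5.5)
Your proposal is correct and follows the paper's proof: you subtract \eqref{eq:oracle-influence} from \eqref{eq:feas-influence} to get $\kap\,\dbar\,W_i$, use \eqref{eq:identities} to obtain $\E W=0$, $\Var W=c_2-1/\kap$ and $\Cov(\psi,W)=0$, and use independence of the errors to make the cross-covariance vanish. The joint-validity concern in your last paragraph is not needed, because two $o_p(1)$ remainders for estimators computed from the same sample always subtract to an $o_p(1)$ remainder; your direct-difference argument is therefore an optional cross-check rather than a required step.
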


\begin{proof}
Subtract~\eqref{eq:oracle-influence} from~\eqref{eq:feas-influence}. For each $i$,
\[
\bm\chi_i-\bd_i\,\psi(\veps_i)=\dbar\,\bigl[\kap\,\veps_i-\psi(\veps_i)\bigr]=\kap\,\dbar\,W_i,
\qquad
W_i=\veps_i-\frac{\psi(\veps_i)}{\kap},
\]
and the factor $\Qm^{-1}\kap^{-1}$ gives~\eqref{eq:diff-influence}. Next, $\E W=0$, because
$\E\veps=0$ and $\E\psi(\veps)=\bh^{\!\top}\E[\bz(\veps)]=0$. By
identities~\eqref{eq:identities},
\begin{align*}
\Cov[\psi(\veps),W]&=\Cov[\psi(\veps),\veps]-\frac{\Var\psi(\veps)}{\kap}=1-1=0,\\
\Var W&=c_2-\frac{2}{\kap}\Cov[\psi(\veps),\veps]+\frac{\Var\psi(\veps)}{\kap^{2}}
=c_2-\frac{1}{\kap}.
\end{align*}
The errors are independent (item~3 of Assumption~\ref{ass:reg}), so the cross-covariance of the
sums $n^{-1/2}\sum_i\bd_i\psi(\veps_i)$ and $n^{-1/2}\sum_i W_i$ equals
$n^{-1}\sum_i\bd_i\Cov[\psi(\veps),W]=\bm 0$, and the covariance of the right-hand side
of~\eqref{eq:diff-influence} is $\Var W\cdot\Qm^{-1}\dbar\dbar^{\!\top}\Qm^{-1}$. This is the
term in~\eqref{eq:main}, in agreement with Theorem~\ref{thm:main}: the difference is
uncorrelated with the oracle influence, so $\Sfe$ is the sum of $\Sor$ and its covariance.
\end{proof}

\subsection{Direction of the increment}\label{app:direction}

\begin{proof}[Proof of Corollary~\ref{cor:direction}]
By item~2 of Assumption~\ref{ass:reg} the matrix $\Qm$ is symmetric and positive definite, so
$\Qm^{-1}$ is symmetric and $\Qm^{-1}\dbar\dbar^{\!\top}\Qm^{-1}=(\Qm^{-1}\dbar)(\Qm^{-1}\dbar)^{\!\top}$.
Substituting this into~\eqref{eq:main}, for any $\bm{a}\in\R^p$ we obtain
\[
\bm{a}^{\!\top}(\Sfe-\Sor)\bm{a}
=c_2(1-\gor)\,\bm{a}^{\!\top}(\Qm^{-1}\dbar)(\Qm^{-1}\dbar)^{\!\top}\bm{a}
=c_2(1-\gor)\,\bigl(\bm{a}^{\!\top}\Qm^{-1}\dbar\bigr)^{2}.
\]
The factor $c_2(1-\gor)$ is nonnegative by Theorem~\ref{thm:main}, so the form is nonnegative;
it vanishes if and only if $\gor=1$ \emph{or} $\bm{a}^{\!\top}\Qm^{-1}\dbar=0$. In particular,
for $\gor<1$ the factor is strictly positive, and the zero is attained exactly on the orthogonal
complement of $\Qm^{-1}\dbar$; for $\gor=1$ the form vanishes for all $\bm{a}$, because the whole
increment matrix is zero.

The second statement is stronger than equality of asymptotic variances and follows directly
from~\eqref{eq:diff-influence}: for any $\bm{a}$ with $\bm{a}^{\!\top}\Qm^{-1}\dbar=0$,
\[
\sqrt n\,\bm{a}^{\!\top}(\bthfe-\bthor)
=\bigl(\bm{a}^{\!\top}\Qm^{-1}\dbar\bigr)\frac1{\sqrt n}\sum_{i=1}^n
\Bigl[\veps_i-\frac{\psi(\veps_i)}{\kap}\Bigr]+o_p(1)=o_p(1).
\]
Such a linear combination of the feasible and oracle estimators thus coincides to first order,
rather than merely having the same asymptotic variance.

Finally, for $\dbar=\bm 0$ or $\gor=1$ the increment matrix in~\eqref{eq:main} is zero, and the
vector $\bm u$ is undefined when $\dbar=\bm 0$. The statement about a \emph{single} direction
therefore requires $\dbar\ne\bm 0$ and $\gor<1$.
\end{proof}

\subsection{Sufficient conditions for oracle equivalence}\label{app:equiv}

\begin{proof}[Proof of Corollary~\ref{cor:equiv}]
In cases~2--4 it suffices to show that the leading term in~\eqref{eq:diff-influence} vanishes;
case~1 changes the procedure itself and is treated separately.

\emph{Case~2.} For $\dbar=\bm 0$ the factor $\Qm^{-1}\dbar$ in~\eqref{eq:diff-influence} is zero,
so the whole right-hand side is $o_p(1)$.

\emph{Case~3.} By Lemma~\ref{lem:diff}, $W=\veps-\psi(\veps)/\kap$ has $\E W=0$ and
$\Var W=c_2-1/\kap$. The condition $c_2=1/\kap$, that is, $\gor=1$ by~\eqref{eq:gor}, gives
$\Var W=0$. Hence $W=0$ almost surely and $\veps=\psi(\veps)/\kap$ in $L_2$, and the sum
in~\eqref{eq:diff-influence} is identically zero. Incidentally, $\Var W=c_2-1/\kap\ge0$ gives
another proof of the inequality $\gor\le1$, which Theorem~\ref{thm:main} derives from the
Cauchy--Schwarz inequality.

\emph{Case~4.} If $\bm{C}=\bm 0$, or if the exact cancellation
$\bm{C}\bm{H}^{-1}(\bm{v}_i+\bm{D}\bm{\ell}_i)=\bm 0$ holds, the efficient score
in~\eqref{eq:twostep} reduces to $\bm{s}_i^{\mathrm{eff}}=\bm{u}_i$, the oracle score, and the two
influence representations coincide.

\emph{Case~1.} If $\bmm$ is supplied externally, Lemma~\ref{lem:mm} is not needed: the centered
basis $\bz$ is known exactly, and the difference of the scores is
$\hat\psi(\veps)-\psi(\veps)=(\hat\bh-\bh)^{\!\top}\bz(\veps)$. The perturbation of the
estimating equation is
\[
\frac1{\sqrt n}\sum_{i=1}^n\bd_i\bigl[\hat\psi(\veps_i)-\psi(\veps_i)\bigr]
=\Bigl[\frac1{\sqrt n}\sum_{i=1}^n\bd_i\,\bz(\veps_i)^{\!\top}\Bigr](\hat\bh-\bh).
\]
The matrix in brackets has zero mean, because the design is fixed and $\E[\bz(\veps)]=\bm 0$
by~\eqref{eq:orth}. Its second-moment norm is bounded by
$\bigl(n^{-1}\sum_i\lVert\bd_i\rVert^{2}\bigr)\lVert\Fm\rVert=O(1)$ by items~1--2 and~5 of
Assumption~\ref{ass:reg}, so the matrix is $O_p(1)$. The factor $\hat\bh-\bh$ is $o_p(1)$ by
item~8, so the product is $o_p(1)$; likewise, $\hat\kap^{-1}-\kap^{-1}=o_p(1)$ multiplies a
normalized score of order $O_p(1)$. The feasible and oracle refinement steps therefore coincide up
to $o_p(1)$. This is the content of~\eqref{eq:orth}: the block $(\Fm,\bb,\bh,\kap)$ is
orthogonal, and consistency of its estimates suffices.

The last statement of the corollary needs no separate proof. The factor $c_2(1-\gor)$
in~\eqref{eq:main} does not depend on $n$, so the ratio
$\bm{a}^{\!\top}\Sfe\bm{a}\big/\bm{a}^{\!\top}\Sor\bm{a}$ does not change as the sample grows.
\end{proof}

\bibliographystyle{plainnat}
\bibliography{references}

\end{document}